\newcommand{\R}{\mathbb{R}}
\newcommand{\C}{\mathbb{C}}
\newcommand{\Z}{\mathbb{Z}}
\renewcommand{\Z}{\mathcal{Z}}
\numberwithin{equation}{section}
\newtheorem{Theorem}{Theorem}[section]
\newtheorem{Lemma}[Theorem]{Lemma}
{ \theoremstyle{definition}
\newtheorem{Remarks}[Theorem]{Remarks} }
\begin{document}


\renewcommand{\thefootnote}{$\star$}

\newcommand{\arXivNumber}{1601.02263}

\renewcommand{\PaperNumber}{046}

\FirstPageHeading

\ShortArticleName{The Asymptotic Expansion of Kummer Functions}

\ArticleName{The Asymptotic Expansion of Kummer Functions\\ for Large Values of the $\boldsymbol{a}$-Parameter,\\ and Remarks on a Paper by Olver\footnote{This paper is a~contribution to the Special Issue
on Orthogonal Polynomials, Special Functions and Applications.
The full collection is available at \href{http://www.emis.de/journals/SIGMA/OPSFA2015.html}{http://www.emis.de/journals/SIGMA/OPSFA2015.html}}}

\Author{Hans VOLKMER}

\AuthorNameForHeading{H.~Volkmer}

\Address{Department of Mathematical Sciences, University of Wisconsin-Milwaukee,\\ P.O.~Box 413, Milwaukee, WI, 53201, USA}
\Email{\href{mailto:volkmer@uwm.edu}{volkmer@uwm.edu}}

\ArticleDates{Received January 10, 2016, in f\/inal form May 01, 2016; Published online May 06, 2016}

\Abstract{It is shown that a known asymptotic expansion of the Kummer function $U(a,b,z)$ as $a$ tends to inf\/inity is valid for $z$ on the full Riemann surface of the logarithm. A corresponding result is also proved in a more general setting considered by Olver (1956).}

\Keywords{Kummer functions; asymptotic expansions}

\Classification{33B20; 33C15; 41A60}

\renewcommand{\thefootnote}{\arabic{footnote}}
\setcounter{footnote}{0}

\section{Introduction}
Recently, the author collaborated on a project \cite{C} investigating the maximal domain in which an integral addition theorem for the Kummer function $U(a,b,z)$ due to Magnus \cite{Magnus41,Magnus} is valid.
In this work it is important to know the asymptotic expansion of $U(a,b,z)$ as $a$~tends to inf\/inity. Such an expansion is well-known, and, for instance, can be found in Slater's book~\cite{S}. Slater's expansion is in terms of modif\/ied Bessel functions $K_\nu(z)$, and it is derived from a paper by Olver \cite{O}.
However, there are two problems when we try to use the known result. As Temme~\cite{T} pointed out, there is an error
in Slater's expansion. Moreover, in all known results the range of validity for the variable $z$ is restricted to certain sectors in the $z$-plane.

The purpose of this paper is two-fold. Firstly, we correct the error in \cite{S}, and we show that the corrected expansion based on \cite{O} agrees with the result in \cite{T} which was obtained in an entirely dif\/ferent way.
Secondly, we show that the asymptotic expansion of $U(a,b,z)$ as~$a$~tends to inf\/inity is valid for $z$ on the full Riemann surface of the logarithm.
This is somewhat surprising because often the range of validity of asymptotic expansions is restricted by Stokes' lines. Olver's
results in~\cite{O} are valid for a more general class of functions (containing conf\/luent hypergeometric functions as a special case.)
He introduces a restriction on~$\arg z$, and on~\mbox{\cite[p.~76]{O}} he writes ``In the case of the series with the basis function~$K_\mu$ we establish the asymptotic property in the range $|\arg z|\le \frac32 \pi$. It is, in fact, unlikely that the valid range exceeds this $\dots$''.
However, we show in this paper that the restriction $|\arg z|\le \frac32 \pi$ can be removed at least under an additional assumption \eqref{1:D}.

In Section \ref{Olver} of this paper we review the results that we need from Olver \cite{O}. We discuss these results in Section~\ref{discussion}. In Section~\ref{removal} we prove that
Olver's asymptotic expansion holds on the full Riemann surface of the logarithm. Sections~\ref{extension}, \ref{AB} and \ref{W3} deal with extensions
to more general values of parameters. In Section~\ref{confluent} we specialize to asymptotic expansions of Kummer functions. In Section~\ref{Temme}
we make the connection to Temme~\cite{T}.

\section{Olver's work}\label{Olver}
Olver \cite[(7.3)]{O} considers the dif\/ferential equation
\begin{gather}\label{1:eq1}
w''(z)=\frac1z w'(z)+\left(u^2+\frac{\mu^2-1}{z^2}+f(z)\right)w(z) .
\end{gather}
The function $f(z)$ is even and analytic in a simply-connected domain $D$ containing $0$.
It is assumed that $\Re\mu\ge 0$.
The goal is to f\/ind the asymptotic
behavior of solutions of \eqref{1:eq1} as $0<u\to\infty$.

Olver \cite[(7.4)]{O} starts with a formal solution to \eqref{1:eq1} of the form
\begin{gather*}
 w(z)=z\Z_\mu(uz)\sum_{s=0}^\infty \frac{A_s(z)}{u^{2s}}+\frac{z}{u}\Z_{\mu+1}(uz)\sum_{s=0}^\infty \frac{B_s(z)}{u^{2s}} ,
\end{gather*}
where either $\Z_\mu=I_\mu$, $\Z_{\mu+1}=I_{\mu+1}$ or $\Z_\mu=K_\mu$, $\Z_{\mu+1}=-K_{\mu+1}$ are modif\/ied Bessel functions.
The functions $A_s(z)=A_s(\mu,z)$, $B_s(z)=B_s(\mu,z)$ are def\/ined by $A_0(z)=1$, and then recursively, for $s\ge 0$,
\begin{gather}
2B_s(z) = -A_s'(z)+\int_0^z\left(f(t)A_s(t)-\frac{2\mu+1}{t} A_s'(t)\right) dt,\label{1:eq3}\\
2A_{s+1}(z) = \frac{2\mu+1}{z} B_s(z)- B_s'(z)+\int f(z)B_s(z) dz.\label{1:eq4}
\end{gather}
The integral in \eqref{1:eq4} denotes an arbitrary antiderivative of $f(z)B_s(z)$.
The func\-tions~$A_s(z)$,~$B_s(z)$ are analytic in $D$, and they are even and odd, respectively.

If the domain $D$ is unbounded, Olver \cite[p.~77]{O} requires that $f(z)=O(|z|^{-1-\alpha})$ as $|z|\to\infty$, where $\alpha>0$.
In our application to the conf\/luent hypergeometric equation in Section~\ref{confluent} the function $f(z)=z^2$ does not satisfy this condition.
Therefore, throughout this paper, we will take
\begin{gather}\label{1:D}
 D=\{z\colon |z|<R_0\},
 \end{gather}
where $R_0$ is a positive constant.
Olver \cite[p.~77]{O} introduces various subdomains $D'$, $D_1$, $D_2$ of~$D$.
We may choose $D'=\{z\colon |z|\le R\}$, where $0<R<R_0$. The domain $D_1$ comprises those points~$z$ in~$D'$ which can be joined to the origin by a contour
which lies in~$D'$ and does not cross either the imaginary axis, or the line through~$z$ parallel to the imaginary axis.
For our special~$D'$ the contour can be taken as the line segment connecting $z$ and $0$, so $D_1=D'$. The domain~$D_1$ appears in
Olver \cite[Theorem~D(i)]{O}. According to this theorem, \eqref{1:eq1} has a solu\-tion~$W_1(u,z)$ of the form
\begin{gather}\label{1:W1}
W_1(u,z)=zI_\mu(uz)\left(\sum_{s=0}^{N-1} \frac{A_s(z)}{u^{2s}}+g_1(u,z)\right)
+\frac{z}{u}I_{\mu+1}(uz)\left(\sum_{s=0}^{N-1} \frac{B_s(z)}{u^{2s}}+zh_1(u,z)\right),\!\!\!
\end{gather}
where
\begin{gather}\label{1:W1a}
 |g_1(u,z)|+|h_1(u,z)|\le \frac{K_1}{u^{2N}}\qquad \text{for} \quad 0<|z|\le R,\quad u\ge u_1 .
\end{gather}

\begin{Remarks}\quad
\begin{enumerate}\itemsep=0pt
\item
The parameter $\mu$ is considered f\/ixed. We may write $W_1(u,\mu,z)$ to indicate the dependence of $W_1$ on $\mu$.
\item
Every solution $w(z)$ of \eqref{1:eq1} is def\/ined on the Riemann surface of the logarithm over~$D$. Note that there is no restriction on $\arg z$ in~\eqref{1:W1a}, see \cite[p.~76]{O}.
\item
The precise statement is this: for every positive integer $N$ there are functions~$g_1$,~$h_1$ and positive constants~$K_1$, $u_1$ (independent of $u,z$)
such that~\eqref{1:W1},~\eqref{1:W1a} hold.
\item
The functions $A_s(z)$, $B_s(z)$ are not uniquely determined because of the free choice of
integration constants in~\eqref{1:eq4}. Even if we make a def\/inite choice of these integration constants, the solution $W_1(u,z)$
is not uniquely determined by~\eqref{1:W1},~\eqref{1:W1a}. For example, one can replace $W_1(u,z)$ by $(1+e^{-u})W_1(u,z)$.
\item
Olver's construction of $W_1(u,z)$ is independent of $N$ but may depend on~$R$.
In our application to the conf\/luent hypergeometric dif\/ferential equation we have $f(z)=z^2$. Then~$R$ can be any positive number
but $W_1(u,z)$ may depend on the choice of $R$.
\item
Olver has the term $\frac{z}{1+|z|}$ in place of~$z$ in front of~$h_1$ in~\eqref{1:W1} but since we assume $|z|\le R$ this makes no dif\/ference.
\end{enumerate}
\end{Remarks}

For the def\/inition of $D_2$ we suppose that $a$ is an arbitrary point of the sector $|\arg a|<\frac12\pi$ and $\epsilon>0$.
Then $D_2$ comprises those points $z\in D'$ for which $|\arg z|\le\frac32\pi$, $\Re z\le \Re a$, and a~contour can be found joining~$z$ and~$a$
which satisf\/ies the following conditions:
\begin{itemize}\itemsep=0pt
\item[(i)] it lies in $D'$,
\item[(ii)]
it lies wholly to the right of the line through $z$ parallel to the imaginary axis,
\item[(iii)]
it does not cross the negative imaginary axis if $\frac12\pi\le \arg z\le \frac32\pi$, and does not cross
the positive imaginary axis if $-\frac32\pi\le \arg z\le -\frac\pi2$,
\item[(iv)]
it lies outside the circle $|t|=\epsilon |z|$.
\end{itemize}

In our special case $D'=\{z\colon |z|\le R\}$ we choose $a=R$. If $0\le\arg z\le \frac32\pi$ and $0<|z|\le R$, we choose the contour starting at~$z$
moving in positive direction parallel to the imaginary axis until we hit the circle $|t|=R$. Then we move clockwise along the circle $|t|=R$ towards~$a$. Taking into account condition~(iv), we see that~$D_2$ is the set of points $z$ with $-\frac32\pi+\delta\le z\le \frac32\pi-\delta$,
$0<|z|\le R$, where $\delta>0$.
The domain $D_2$ appears in Olver \cite[Theorem~D(ii)]{O}. According to this theorem, \eqref{1:eq1} has a~solution $W_2(u,z)$ of the form
\begin{gather}
W_2(u,z)=zK_\mu(uz)\left(\sum_{s=0}^{N-1} \frac{A_s(z)}{u^{2s}}+g_2(u,z)\right)\nonumber\\
\hphantom{W_2(u,z)=}{} -\frac{z}{u}K_{\mu+1}(uz)\left(\sum_{s=0}^{N-1} \frac{B_s(z)}{u^{2s}}+zh_2(u,z)\right),\label{1:W2}
\end{gather}
where
\begin{gather}\label{1:W2a}
 |g_2(u,z)|+|h_2(u,z)|\le \frac{K_2}{u^{2N}}\qquad \text{for} \quad 0<|z|\le R, \quad |\arg z|\le \frac32\pi-\delta, \quad u\ge u_2 .
\end{gather}
Note that in \eqref{1:W2a} there is a restriction on $\arg z$.

In the rest of this paper we choose the functions $A_s(z)$ such that
\begin{gather}\label{1:As}
A_s(0)=0\qquad\text{if} \quad s\ge 1.
\end{gather}
Then the functions $A_s(z)$, $B_s(z)$ are uniquely determined.

\section[Properties of solutions $W_1$ and $W_2$]{Properties of solutions $\boldsymbol{W_1}$ and $\boldsymbol{W_2}$}\label{discussion}

The dif\/ferential equation \eqref{1:eq1} has a regular singularity at $z=0$ with exponents $1\pm \mu$.
Substituting $x=z^2$ we obtain an equation which has a regular singularity at $x=0$ with exponents $\frac12(1\pm\mu)$.
Therefore, for every $\mu$ which is not a negative integer, \eqref{1:eq1} has a unique solution $W_+(z)=W_+(u,\mu,z)$ of the form
\begin{gather*}
W_+(z)= z^{1+\mu}\sum_{n=0}^\infty c_nz^{2n},
\end{gather*}
where the $c_n$ are determined by $c_0=1$, and
\begin{gather*}
4n(\mu+n)c_n=u^2c_{n-1}+\sum_{j=0}^{n-1}f_jc_{n-1-j} \qquad\text{for} \quad n\ge 1
\end{gather*}
when
\begin{gather*}
f(z)=\sum_{n=0}^\infty f_n z^{2n} .
\end{gather*}

If $\mu$ is not an integer, then $W_+(u,\mu,z)$ and $W_+(u,-\mu,z)$ form a fundamental system of solutions of~\eqref{1:eq1}.
If $\Re \mu\ge 0$, there is a solution $W_-(z)$ linearly independent of $W_+(z)$ such that
\begin{gather*}
W_-(z)=z^{1-\mu}p\big(z^2\big)+ d \ln z W_+(z),
\end{gather*}
where $p$ is a power series and $d$ is a suitable constant. If $\mu\ne 0$ we choose $p(0)=1$. If~$\mu$ is not an integer then $d=0$.

\begin{Lemma}\label{2:l1}
Suppose $\Re\mu\ge 0$.
There is a function $\alpha(u)$ such that
\begin{gather*}
W_1(u,z)=\alpha(u)W_+(u,z),
\end{gather*}
and, for every $N=1,2,3,\dots$,
\begin{gather}\label{2:alpha}
\alpha(u)=\frac{2^{-\mu}u^\mu}{\Gamma(\mu+1)}\left(1+O\left(\frac1{u^{2N}}\right)\right)\qquad\text{as} \quad 0<u\to\infty.
\end{gather}
\end{Lemma}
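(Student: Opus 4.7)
\smallskip
\noindent\textbf{Proof plan.} My strategy is to use Frobenius theory at the regular singular point $z=0$ of~\eqref{1:eq1}. Since $W_1(u,\cdot)$ is a solution of \eqref{1:eq1} on $D$, near $z=0$ it must be a linear combination $c_+(u)W_+(z)+c_-(u)W_-(z)$ of the two Frobenius solutions described in Section~\ref{discussion}. The first step is to show $c_-(u)=0$. For this I would estimate the growth of $W_1(u,z)$ as $z\to 0$ directly from~\eqref{1:W1}. Using the standard small-argument behavior $I_\nu(w)\sim (w/2)^\nu/\Gamma(\nu+1)$, the factor $zI_\mu(uz)$ is $O(|z|^{1+\Re\mu})$ and $(z/u)I_{\mu+1}(uz)$ is $O(|z|^{2+\Re\mu})$; since $A_s$, $B_s$ are analytic at $0$ and \eqref{1:W1a} bounds $g_1,h_1$ uniformly, this forces $W_1(u,z)=O(|z|^{1+\Re\mu})$ as $z\to 0$. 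On the other hand $W_-$ grows like $|z|^{1-\Re\mu}$ (or like $|z\log z|$ when $\mu=0$), which strictly dominates $|z|^{1+\Re\mu}$ for $\Re\mu\ge 0$. Hence $c_-(u)=0$, and we set $\alpha(u):=c_+(u)$.

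Having established $W_1(u,z)=\alpha(u)W_+(u,z)$, and knowing $W_+(u,z)=z^{1+\mu}(1+O(z^2))$, the coefficient is recovered by the limit $\alpha(u)=\lim_{z\to 0} W_1(u,z)/z^{1+\mu}$. Dividing~\eqref{1:W1} by $z^{1+\mu}$, the three ingredients are: $I_\mu(uz)/z^\mu\to (u/2)^\mu/\Gamma(\mu+1)$; the $I_{\mu+1}$ term contributes $O(|z|)$ and vanishes; and the partial sum $\sum_{s=0}^{N-1} A_s(z)/u^{2s}$ tends to $1$ by the normalization~\eqref{1:As}, which fixes $A_0(0)=1$ and $A_s(0)=0$ for $s\ge 1$. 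This gives
\begin{gather*}
\alpha(u)=\frac{(u/2)^\mu}{\Gamma(\mu+1)}\Bigl(1+\lim_{z\to 0}g_1(u,z)\Bigr).
\end{gather*}
The uniform bound $|g_1(u,z)|\le K_1/u^{2N}$ from~\eqref{1:W1a} then implies
\begin{gather*}
\Bigl|\alpha(u)-\frac{2^{-\mu}u^\mu}{\Gamma(\mu+1)}\Bigr|\le \frac{2^{-\mu}u^\mu}{\Gamma(\mu+1)}\cdot\frac{K_1}{u^{2N}}\qquad\text{for}\quad u\ge u_1,
\end{gather*}
which is exactly \eqref{2:alpha}. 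Since $N$ is arbitrary, we are done.

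The step I expect to be most delicate is the existence of $\lim_{z\to 0}g_1(u,z)$: the hypothesis \eqref{1:W1a} only asserts boundedness, not convergence. My resolution is a posteriori: once the first step has shown $W_1=\alpha(u)W_+$, the quotient $W_1(u,z)/z^{1+\mu}$ is analytic at $z=0$, so its limit exists; since all other factors on the right-hand side of the divided form of~\eqref{1:W1} converge, the limit of $g_1(u,z)$ must exist as well, and its modulus inherits the bound $K_1/u^{2N}$. The only other point that needs a brief separate remark is the exponent-difference case ($\mu=0$ or $\mu$ a positive integer), where $W_-$ may carry a $\log z$; but the growth of $W_-$ still strictly exceeds $|z|^{1+\Re\mu}$, so the vanishing of $c_-(u)$ in the first step goes through unchanged.
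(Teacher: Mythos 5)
Your overall strategy coincides with the paper's: decompose $W_1$ into the Frobenius solutions $W_\pm$ at the regular singular point $z=0$, kill the $W_-$ component by a limiting argument as $z\to 0$, and then read off $\alpha(u)=\alpha_+(u)$ from the limit of $z^{-\mu-1}W_1(u,z)$ using the uniform bound \eqref{1:W1a} and the normalization \eqref{1:As}. The second half of your argument, including the a posteriori justification that $\lim_{z\to 0}g_1(u,z)$ exists once $W_1=\alpha(u)W_+$ is known, is sound and is essentially what the paper does implicitly.

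The gap is in your first step, in the borderline case $\Re\mu=0$, $\mu\ne 0$, which is covered by the hypothesis $\Re\mu\ge 0$. There $W_+(z)\sim z^{1+\mu}$ and $W_-(z)\sim z^{1-\mu}$ with $\mu$ purely imaginary, so along the positive real axis both have modulus asymptotic to $|z|$ (for $z>0$ one has $|z^{\pm\mu}|=|z|^{\pm\Re\mu}=1$). Your assertion that $|z|^{1-\Re\mu}$ \emph{strictly} dominates $|z|^{1+\Re\mu}$ for all $\Re\mu\ge 0$ is therefore false exactly when $\Re\mu=0$, and the estimate $W_1(u,z)=O(|z|)$ cannot then exclude a $W_-$ component. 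The paper resolves this by not approaching $0$ along the real axis: it uses a spiral path on which $z^{2\mu}\to 0$, so that in $z^{\mu-1}W_1(u,z)$ the $W_+$ contribution is killed while the $W_-$ contribution survives; this exploits the fact that \eqref{1:W1a} holds with no restriction on $\arg z$. Your treatment of $\Re\mu>0$ and of $\mu=0$ (where the $\log z$ in $W_-$ genuinely dominates $|z|$) is fine, but you need a separate device of this kind for purely imaginary nonzero $\mu$; as written, the lemma is not proved in that case.
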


\begin{proof}
There are functions $\alpha_+(u)$, $\alpha_-(u)$ such that
\begin{gather}\label{2:connectW1a}
W_1(u,z)=\alpha_+(u)W_+(u,z)+\alpha_-(u)W_-(u,z).
\end{gather}
Suppose $\Re\mu>0$. Then \eqref{2:connectW1a} implies
\begin{gather}\label{2:limit1}
\lim_{z\to0^+} z^{\mu-1}W_1(u,z)=\alpha_-(u).
\end{gather}
We use \cite[(10.30.1)]{NIST}
\begin{gather*}
 \lim_{z\to0} I_\nu(z)z^{-\nu}=\frac{2^{-\nu}}{\Gamma(\nu+1)}.
\end{gather*}
Then \eqref{1:W1}, \eqref{1:W1a} give{\samepage
\begin{gather}\label{2:limit2}
\lim_{z\to0^+} z^{\mu-1}W_1(u,z)= 0.
\end{gather}
It follows from \eqref{2:limit1}, \eqref{2:limit2} that $\alpha_-(u)=0$.}

Now suppose that $\Re\mu=0$, $\mu\ne 0$. Then we argue as before but instead of $z\to0^+$ we approach $0$ along a spiral $z=re^{\pm i r}$, $0<r\to 0$, when $\pm \Im \mu>0$.
Then along this spiral $z^{2\mu}\to 0$. We obtain again that $\alpha_-(u)=0$.
In a similar way, we also show that $\alpha_-(u)=0$ when $\mu=0$.

Therefore, \eqref{2:connectW1a} gives
\begin{gather*} \lim_{z\to0^+} z^{-\mu-1} W_1(z,u)=\alpha_+(u)\end{gather*}
and, from \eqref{1:W1}, \eqref{1:W1a}, \eqref{1:As}
\begin{gather*} \lim_{z\to0^+} z^{-\mu-1}W_1(u,z)=\frac{2^{-\mu}u^\mu}{\Gamma(\mu+1)}\left(1+O\left(\frac1{u^{2N}}\right)\right)\end{gather*}
which implies \eqref{2:alpha} with $\alpha(u)=\alpha_+(u)$.
\end{proof}

Let us def\/ine
\begin{gather*}
W_3(u,\mu,z)= \frac{2^{-\mu}u^\mu}{\Gamma(\mu+1)}W_+(u,\mu,z) .
\end{gather*}
Then Lemma~\ref{2:l1} gives
\begin{gather*}
 W_3(u,z)=\tilde\alpha(u) W_1(u,z),\qquad \text{where} \quad \tilde\alpha(u)=1+O\left(\frac1{u^{2N}}\right).
\end{gather*}
Therefore, $W_3$ admits the asymptotic expansion \eqref{1:W1}, \eqref{1:W1a}, so we can replace $W_1$ by $W_3$.
Note that in contrast to $W_1$, $W_3$ is a uniquely def\/ined function which is identif\/ied as a (Floquet) solution of~\eqref{1:eq1} and not by its asymptotic behavior as $u\to\infty$.

Unfortunately, it seems impossible to replace $W_2$ by an easily identif\/iable solution of~\eqref{1:eq1}. However, we will now prove several useful properties of~$W_2$.

\begin{Lemma}\label{2:l2}
Suppose that $\Re\mu\ge 0$. There is a function $\beta(u)$ such that
\begin{gather}\label{2:eq3}
 W_2\big(u,ze^{\pi i}\big)-e^{\pi i(1-\mu)}W_2(u,z)= \beta(u) W_3(u,z) ,
\end{gather}
and, for every $N=1,2,3,\dots$,
\begin{gather}\label{2:beta}
 \beta(u)=\pi i\left(1+O\left(\frac{1}{u^{2N}}\right)\right)\qquad\text{as}\quad 0<u\to\infty.
 \end{gather}
\end{Lemma}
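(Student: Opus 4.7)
The plan has two parts: first show that \eqref{2:eq3} holds for some scalar function $\beta(u)$ by a monodromy argument, then evaluate both sides at a convenient point to obtain \eqref{2:beta}.

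For existence: because $f$ is even, the equation \eqref{1:eq1} is invariant under $z\mapsto ze^{i\pi}$, so $L(z):=W_2(u,ze^{i\pi})-e^{i\pi(1-\mu)}W_2(u,z)$ is another solution of \eqref{1:eq1}. Write $W_2=c_+(u)W_++c_-(u)W_-$. From $W_+(ze^{i\pi})=e^{i\pi(1+\mu)}W_+(z)$ and $W_-(ze^{i\pi})=e^{i\pi(1-\mu)}W_-(z)+d'W_+(z)$ (where $d'=0$ unless $\mu$ is a nonnegative integer, in which case the logarithmic term in $W_-$ produces $d'=-i\pi d\,e^{i\pi\mu}$), the $W_-$-component of $L$ cancels and $L$ reduces to a scalar multiple of $W_+$. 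Hence $L=\beta(u)W_3(u,z)$ for some scalar $\beta(u)$.

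For the asymptotic formula, I fix $r\in(0,R)$ and use $\beta(u)=L(r)/W_3(u,r)$. Substituting the connection formulas $K_\mu(ure^{i\pi})=e^{-i\pi\mu}K_\mu(ur)-i\pi I_\mu(ur)$ and $K_{\mu+1}(ure^{i\pi})=-e^{-i\pi\mu}K_{\mu+1}(ur)-i\pi I_{\mu+1}(ur)$, together with the parities $A_s(-z)=A_s(z)$ and $B_s(-z)=-B_s(z)$, into \eqref{1:W2} evaluated at $z=re^{i\pi}$, the factor $e^{i\pi(1-\mu)}=-e^{-i\pi\mu}$ in front of $W_2(u,r)$ causes the $K_\mu(ur)$ and $K_{\mu+1}(ur)$ pieces of the two summands to collapse into the differences $g_2(u,re^{i\pi})-g_2(u,r)$ and $h_2(u,re^{i\pi})-h_2(u,r)$, each bounded by $2K_2/u^{2N}$ in view of \eqref{1:W2a}. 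The surviving $I$-type part of $L(r)$ is $i\pi$ times an expression of exactly the form \eqref{1:W1}, with $g_2(u,re^{i\pi}),h_2(u,re^{i\pi})$ in place of $g_1(u,r),h_1(u,r)$.

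By Lemma~\ref{2:l1}, $W_3(u,r)=\tilde\alpha(u)W_1(u,r)$ with $\tilde\alpha(u)=1+O(u^{-2N})$, and the denominator is dominated by $rI_\mu(ur)\sim r\,e^{ur}/\sqrt{2\pi ur}$ since $A_0=1$. Dividing the two $I$-type expansions therefore produces a ratio $i\pi[1+O(u^{-2N})]$, while the $K$-type contribution to $L(r)/W_3(u,r)$ is $O(u^{-2N}e^{-2ur})$, which is absorbed in the $O(u^{-2N})$ estimate. This yields \eqref{2:beta}. The main technical obstacle is the algebraic bookkeeping in the substitution step: one must verify that the particular multiplier $e^{i\pi(1-\mu)}$ in \eqref{2:eq3} is exactly the one that causes the $K$-type pieces to reduce to differences of remainder functions rather than surviving as independent $O(1)$ contributions; this cancellation is what makes the sharp error bound $O(u^{-2N})$ possible.
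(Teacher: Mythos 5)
Your proposal is correct and follows essentially the same route as the paper: a monodromy argument showing $W_2(u,ze^{\pi i})-e^{\pi i(1-\mu)}W_2(u,z)$ is a multiple of $W_+$ (hence of $W_3$), followed by substituting the $K_\nu$ continuation formula into \eqref{1:W2} at $ze^{\pi i}$ so that the $e^{\pi i(1-\mu)}$ multiplier cancels the $K$-type sums down to remainder differences, and then evaluating at a fixed positive point and dividing by the dominant $I_\mu$-term. The only cosmetic difference is that the paper fixes $z=R$ and divides by $RI_\mu(uR)$ rather than by $W_3(u,r)$ itself.
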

\begin{proof}
We set $\lambda_\pm=e^{\pi i(1\pm\mu)}$. Equation \eqref{1:eq1} has a fundamental system of solutions~$W_+$,~$W_-$ such that
\begin{gather*}
W_+\big(ze^{\pi i}\big)=\lambda_+ W_+(z),\qquad
W_-\big(ze^{\pi i}\big)=\lambda_- W_-(z)+\rho W_+(z) .
\end{gather*}
Let $w(z)=c_+W_+(z)+c_-W_-(z)$ be any solution of \eqref{1:eq1}.
Then
\begin{gather*}
w\big(ze^{\pi i}\big)-\lambda_- w(z)=((\lambda_+-\lambda_-)c_++\rho c_-) W_+(z).
\end{gather*}
If we apply this result to $w=W_2$ we see that there is a function~$\beta(u)$ such that~\eqref{2:eq3} holds.

Let $z>0$ and set $z_1=ze^{\pi i}$.
We use~\eqref{1:W2} for $z_1$ in place of~$z$, and \cite[(10.34.2)]{NIST}
\begin{gather}\label{3:ancontK}
K_\nu\big(ze^{\pi im}\big)= e^{-\pi i \nu m}K_\nu(z)-\pi i \frac{\sin(\pi \nu m)}{\sin(\pi \nu)} I_\nu(z)
\end{gather}
with $m=1$. Then
\begin{gather*}
 W_2(u,z_1) = z\left(\lambda_-K_\mu(uz)+\pi i I_\mu(uz)\right)\left(\sum_{s=0}^{N-1}\frac{A_s(z)}{u^{2s}}+g_2(u,z_1)\right)\\
\hphantom{W_2(u,z_1) =}{} +\frac{z}{u}\left(-\lambda_-K_{\mu+1}(uz)+\pi i I_{\mu+1}(uz)\right) \left(\sum_{s=0}^{N-1}\frac{B_s(z)}{u^{2s}}+zh_2(u,z_1)\right).
\end{gather*}
Using \eqref{1:W2} a second time, we f\/ind that
\begin{gather*}
 W_2(u,z_1)-\lambda_-W_2(u,z)=\pi i zI_\mu(uz)\left(\sum_{s=0}^{N-1}\frac{A_s(z)}{u^{2s}}+g_2(u,z_1)\right)\\
\qquad{}
+\pi i\frac{z}{u}I_{\mu+1}(uz) \left(\sum_{s=0}^{N-1}\frac{B_s(z)}{u^{2s}}+zh_2(u,z_1)\right)\\
\qquad{}
+\lambda_- zK_\mu(uz) (g_2(u,z_1)-g_2(u,z))-\lambda_-\frac{z^2}{u}K_{\mu+1}(uz)(h_2(u,z_1)-h_2(u,z)).
\end{gather*}
We now expand the right-hand side of \eqref{2:eq3} using \eqref{1:W1}, and compare the expansions.
Setting $z=R$ and dividing by $R I_\mu(uR)$, we obtain
\begin{gather*}
 (\beta(u)-\pi i) \left(1+O\left(\frac1u\right)\right)=O\left(\frac{1}{u^{2N}}\right) \quad \text{as $0<u\to\infty$,}
\end{gather*}
where we used \cite[(10.40.1)]{NIST}
\begin{gather}\label{2:asyI}
 I_\nu(x)=\frac{e^x}{\sqrt{2\pi x}}\left(1+O\left(\frac1x\right)\right)\qquad\text{as} \quad 0<x\to\infty,
\end{gather}
and \cite[(10.40.2)]{NIST}
\begin{gather}\label{2:asyK}
 K_\nu(x)=\sqrt{\frac\pi{2x}}e^{-x}\left(1+O\left(\frac1x\right)\right)\qquad\text{as} \quad 0<x\to\infty.
\end{gather}
This proves \eqref{2:beta}.
\end{proof}

\begin{Lemma}\label{2:l3}\quad
\begin{enumerate}\itemsep=0pt
\item[$(a)$] If $\Re \mu>0$ then, for every $N=1,2,3,\dots$, we have
\begin{gather}\label{2:limitW2}
\limsup_{z\to 0^+}\left| z^{\mu-1}W_2(u,z)-\Gamma(\mu)2^{\mu-1}u^{-\mu}\left(1-2\mu\sum_{s=0}^{N-1}\frac{B_s'(0)}{u^{2s+2}}\right)\right|=
O\left(\frac{u^{-\mu}}{u^{2N+2}}\right)
\end{gather}
as $0<u\to\infty$.
\item[$(b)$] If $\Re \mu=0$, $\mu\neq0$, \eqref{2:limitW2} holds when we replace $z^{\mu-1}W_2(u,z)$ by
\begin{gather*}
z^{\mu-1}W_2(u,z)-\Gamma(-\mu)2^{-\mu-1}u^\mu z^{2\mu}.
\end{gather*}
\item[$(c)$] If $\mu=0$ then
\begin{gather*}
\limsup_{z\to 0^+} \left|\frac{W_2(u,z)}{z\ln z}+1\right|=O\left(\frac{1}{u^{2N}}\right) .
\end{gather*}
\end{enumerate}
\end{Lemma}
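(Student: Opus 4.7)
In all three parts I would substitute Olver's formula \eqref{1:W2} together with the small-$z$ expansions of $K_\mu(uz)$ and $K_{\mu+1}(uz)$ into the expression whose limsup is wanted, then use $A_0\equiv 1$, $A_s(0)=0$ for $s\ge 1$, and the oddness relation $B_s(z)=B_s'(0)z+O(z^3)$ to identify which terms survive. For (a) and (b) I would first apply \eqref{1:W2} with $N+1$ summands rather than $N$, so that the remainders obey $|g_2|+|h_2|=O(u^{-2N-2})$; the additional $A_N$-term drops out in the limit because $A_N(0)=0$, and the additional $B_N$-term contributes at size $u^{-\Re\mu-2N-2}$, which falls into the error.

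\textbf{Cases (a) and (b).} The dominant near-zero behaviour of $K_\nu(uz)$ is $\tfrac12\Gamma(\nu)(uz/2)^{-\nu}$, accompanied by a secondary piece of order $z^\nu$ (logarithmic if $\nu$ is a positive integer, but of the same order). After multiplying by $z^{\mu-1}$, the contribution $z^{\mu-1}\cdot zK_\mu(uz)\cdot A_0$ produces the constant $\Gamma(\mu)2^{\mu-1}u^{-\mu}$; the remaining $A_s$ pieces with $s\ge 1$ vanish in the limit thanks to $A_s(0)=0$. The companion term $-z^{\mu-1}\cdot(z/u)K_{\mu+1}(uz)\cdot B_s(z)/u^{2s}$, using $B_s(z)\sim B_s'(0)z$ and $\Gamma(\mu+1)=\mu\Gamma(\mu)$, produces $-2\mu\,\Gamma(\mu)2^{\mu-1}u^{-\mu-2s-2}B_s'(0)$. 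Summation over $s$ delivers the advertised main term. The secondary $z^\nu$ piece in $K_\mu$, multiplied by $z^{\mu-1}\cdot z$, carries a factor $z^{2\mu}$; in (a) with $\Re\mu>0$ this vanishes as $z\to 0^+$, whereas in (b) with $\Re\mu=0$ it has modulus one and is precisely the subtracted quantity $\Gamma(-\mu)2^{-\mu-1}u^\mu z^{2\mu}$. All other contributions (higher $z$-powers and the products with $g_2,h_2$) are of size $u^{-\Re\mu-2N-2}$.

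\textbf{Case (c).} For $\mu=0$ one uses $K_0(uz)=-\ln z-\ln(u/2)-\gamma+O(z^2\ln z)$, so that $K_0(uz)/\ln z\to -1$ as $z\to 0^+$; meanwhile $K_1(uz)=O(1/z)$ combined with $G(u,z)=O(z)$ forces $\frac{K_1(uz)}{u\ln z}G(u,z)\to 0$. Writing $F(u,z)=1+g_2(u,z)+O(z^2)$ one obtains $\frac{W_2(u,z)}{z\ln z}+1=-g_2(u,z)+o(1)$ as $z\to 0^+$, and the bound $|g_2|\le K_2/u^{2N}$ yields the required $O(u^{-2N})$.

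\textbf{Main obstacle.} The essential technical subtlety is that the remainders $g_2(u,z)$ and $h_2(u,z)$ need not have individual limits as $z\to 0^+$; only the uniform estimate \eqref{1:W2a} is available. This is precisely why the conclusions are formulated with $\limsup$, and it forces a careful accounting of how each $z$-dependent Bessel coefficient multiplies the non-convergent remainders when extracting the leading-order asymptotics. The separation into cases (a), (b), (c) simply reflects whether the secondary near-origin contribution of $K_\mu$ vanishes, oscillates, or dominates the principal Bessel term.
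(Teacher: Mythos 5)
Your proposal is correct and follows essentially the same route as the paper: substitute the expansion \eqref{1:W2} into the relevant quantity, use the small-argument limits of $K_\mu$ and $K_{\mu+1}$ together with $A_s(0)=0$ and $B_s(z)\sim B_s'(0)z$, and control the non-convergent remainders $g_2,h_2$ via $\limsup$ and the bound \eqref{1:W2a}. The only cosmetic difference is that you run the expansion with $N+1$ summands to land directly on the stated error order, whereas the paper applies it with $N$ and obtains the result ``with $N-1$ in place of $N$,'' which is equivalent since $N$ is arbitrary.
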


\begin{proof}
Suppose that $\Re\mu>0$.
Then we use \cite[(10.30.2)]{NIST}
\begin{gather*}
\lim_{x\to 0^+} x^\nu K_\nu(x)=\Gamma(\nu)2^{\nu-1}\qquad\text{for} \quad \Re\nu>0.
\end{gather*}
It follows that
\begin{gather}
 \lim_{z\to 0^+} z^\mu K_\mu(uz) = \Gamma(\mu)2^{\mu-1}u^{-\mu}, \label{2:limitK1}\\
 \lim_{z\to 0^+} \frac1u z^{\mu+1} K_{\mu+1}(uz) = \Gamma(\mu+1)2^\mu u^{-\mu-2}.\label{2:limitK2}
\end{gather}
Using \eqref{1:W2}, \eqref{1:As}, \eqref{2:limitK1}, \eqref{2:limitK2}, we obtain
\begin{gather*}
 \limsup_{z\to0^+}\left|z^{\mu-1}W_2(u,z)-\Gamma(\mu)2^{\mu-1}u^{-\mu}\left(1-2\mu\sum_{s=0}^{N-1}\frac{B_s'(0)}{u^{2s+2}}\right)\right|\\
\qquad{}\le\limsup_{z\to0^+} \left|\Gamma(\mu)2^{\mu-1}u^{-\mu}g_2(u,z)- \Gamma(\mu+1)2^\mu u^{-\mu-2}h_2(u,z)\right| .
\end{gather*}
Now \eqref{1:W2a} gives \eqref{2:limitW2} with $N-1$ in place of $N$. If $\Re\mu=0$, $\mu\neq0$, then we use \cite[(9.7)]{O}
\begin{gather*}
K_\mu(x)=\Gamma(\mu)2^{\mu-1}x^{-\mu}+\Gamma(-\mu)2^{-\mu-1}x^\mu+o(1)\qquad\text{as}\quad 0<x\to0
\end{gather*}
and argue similarly. If $\mu=0$ we use \cite[(10.30.3)]{NIST}
\begin{gather*}
\lim_{x\to 0^+} \frac{K_0(x)}{\ln x} =-1 .\tag*{\qed}
\end{gather*}
\renewcommand{\qed}{}
\end{proof}

\begin{Theorem}\label{2:t1}
Suppose that $\Re\mu\ge 0$ and $\mu$ is not an integer.
There are functions $\gamma(u)$, $\delta(u)$ such that
\begin{gather}\label{2:connectW2}
W_2(u,z)=\gamma(u)W_3(u,\mu,z)+\delta(u)W_3(u,-\mu,z) ,
\end{gather}
and, for every $N=1,2,3,\dots$,
\begin{gather}
 \gamma(u) = -\frac{\pi}{2\sin(\pi\mu)}\left(1+O\left(\frac{1}{u^{2N}}\right)\right),\label{2:gamma}\\
 \delta(u) = \frac{\pi}{2\sin(\pi\mu)}\left(1-2\mu\sum_{s=0}^{N-1}\frac{B_s'(0)}{u^{2s+2}}+O\left(\frac{1}{u^{2N+2}}\right)\right).\label{2:delta}
\end{gather}
\end{Theorem}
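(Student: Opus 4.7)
Existence of $\gamma(u)$ and $\delta(u)$ satisfying \eqref{2:connectW2} is automatic: since $\mu$ is not an integer, $\{W_3(u,\mu,z),W_3(u,-\mu,z)\}$ is a fundamental system of solutions of~\eqref{1:eq1}, and $W_2(u,z)$ is one such solution. The content of the theorem lies in the asymptotic estimates~\eqref{2:gamma} and~\eqref{2:delta}; I plan to derive them from Lemma~\ref{2:l2} and Lemma~\ref{2:l3} respectively.

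\textbf{Step 1: $\gamma(u)$ via monodromy.} From the power-series form of $W_+(u,\pm\mu,z)=z^{1\pm\mu}(1+O(z^2))$ one reads off the monodromy relations $W_3\big(u,\pm\mu,ze^{\pi i}\big)=e^{\pi i(1\pm\mu)}W_3(u,\pm\mu,z)$. Substituting~\eqref{2:connectW2} into both terms of the left-hand side of~\eqref{2:eq3}, the $\delta(u)$-contribution cancels because $W_3(u,-\mu,\cdot)$ transforms by exactly the factor $e^{\pi i(1-\mu)}$ that multiplies $W_2(u,z)$ in~\eqref{2:eq3}, leaving
\begin{gather*}
\gamma(u)\big(e^{\pi i(1+\mu)}-e^{\pi i(1-\mu)}\big)W_3(u,\mu,z)=\beta(u)W_3(u,\mu,z).
\end{gather*}
Since $e^{\pi i(1+\mu)}-e^{\pi i(1-\mu)}=-2i\sin(\pi\mu)$, this yields $\gamma(u)=-\beta(u)/(2i\sin(\pi\mu))$, and~\eqref{2:beta} then converts directly into~\eqref{2:gamma}.

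\textbf{Step 2: $\delta(u)$ via the behaviour at $z=0$.} Expanding the defining power series gives
\begin{gather*}
z^{\mu-1}W_3(u,\mu,z)=\frac{2^{-\mu}u^{\mu}}{\Gamma(\mu+1)}z^{2\mu}\big(1+O(z^2)\big),\qquad
z^{\mu-1}W_3(u,-\mu,z)=\frac{2^{\mu}u^{-\mu}}{\Gamma(1-\mu)}\big(1+O(z^2)\big).
\end{gather*}
For $\Re\mu>0$ the first expression vanishes as $z\to 0^+$, so multiplying~\eqref{2:connectW2} by $z^{\mu-1}$, letting $z\to 0^+$, and comparing with Lemma~\ref{2:l3}(a) gives
\begin{gather*}
\delta(u)\,\frac{2^{\mu}u^{-\mu}}{\Gamma(1-\mu)}=\Gamma(\mu)2^{\mu-1}u^{-\mu}\left(1-2\mu\sum_{s=0}^{N-1}\frac{B_s'(0)}{u^{2s+2}}\right)+O\big(u^{-\mu-2N-2}\big),
\end{gather*}
and the reflection formula $\Gamma(\mu)\Gamma(1-\mu)=\pi/\sin(\pi\mu)$ turns this into~\eqref{2:delta}.

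\textbf{Step 3: the delicate case $\Re\mu=0$, $\mu\ne 0$.} Here $z^{2\mu}$ oscillates instead of vanishing, so $z^{\mu-1}W_2(u,z)$ itself has no limit; this is where Lemma~\ref{2:l3}(b) is needed. Using the leading term $\gamma(u)\sim-\pi/(2\sin(\pi\mu))=\Gamma(\mu+1)\Gamma(-\mu)/2$ already obtained in Step~1, the oscillating piece $\gamma(u)z^{\mu-1}W_3(u,\mu,z)$ has leading behaviour $\Gamma(-\mu)2^{-\mu-1}u^{\mu}z^{2\mu}$, which is exactly the quantity subtracted from $z^{\mu-1}W_2(u,z)$ in Lemma~\ref{2:l3}(b). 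After this subtraction the argument of Step~2 runs verbatim. The main obstacle throughout is precisely this synchronisation of oscillating tails together with the propagation of the Lemma~\ref{2:l3} error term as a relative $O(u^{-2N-2})$ in~$\delta(u)$; everything else is clean algebra driven by the two preceding lemmas.
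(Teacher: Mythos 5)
Your proposal is correct and follows essentially the same route as the paper: existence of $\gamma$, $\delta$ from linear independence of $W_3(u,\pm\mu,z)$, then $\gamma(u)=-\beta(u)/(2i\sin(\pi\mu))$ via the monodromy identity of Lemma~\ref{2:l2}, and $\delta(u)$ by taking $z\to0^+$ in $z^{\mu-1}W_2(u,z)$ and invoking Lemma~\ref{2:l3} together with the reflection formula. Your Step~3 even spells out the $\Re\mu=0$ case more explicitly than the paper, which merely remarks that the proof is ``similar''.
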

\begin{proof}
Since $\mu$ is not an integer, $W_3(u,\mu,z)$ and $W_3(u,-\mu,z)$ are linearly independent so \eqref{2:connectW2} holds for
some suitable functions~$\gamma$,~$\delta$.
From~\eqref{2:connectW2} we get
\begin{gather*}
W_2\big(u,ze^{\pi i}\big)-e^{\pi i (1-\mu)}W_2(u,z)=\gamma(u)\big(e^{\pi i(1+\mu)}-e^{\pi i(1-\mu)}\big) W_3(u,z) .
\end{gather*}
Comparing with Lemma~\ref{2:l2}, we f\/ind
$-2i\gamma(u)\sin(\pi\mu) =\beta(u)$.
Now~\eqref{2:beta} gives~\eqref{2:gamma}.

Suppose that $\Re\mu>0$. Then \eqref{2:connectW2} yields
\begin{gather*}
\lim_{z\to0^+} z^{\mu-1}W_2(u,z)=\delta(u)\frac{2^\mu u^{-\mu}}{\Gamma(1-\mu)}.
\end{gather*}
Using Lemma \ref{2:l3}(a) we obtain
\begin{gather*}
\Gamma(\mu)2^{\mu-1}u^{-\mu}\left(1-2\mu\sum_{s=0}^{N-1} \frac{B_s'(0)}{u^{2s+2}}+O\left(\frac{1}{u^{2N+2}}\right)\right)=\delta(u)\frac{2^\mu u^{-\mu}}{\Gamma(1-\mu)}.
\end{gather*}
Applying the ref\/lection formula for the Gamma function, we obtain~\eqref{2:delta}.
If $\Re\mu=0$, $\mu\neq0$, the proof of~\eqref{2:delta} is similar.
\end{proof}

\section[Removal of restriction on $\arg z$]{Removal of restriction on $\boldsymbol{\arg z}$}\label{removal}

Using $\beta(u)$ from Lemma \ref{2:l2} we def\/ine
\begin{gather*} W_4(u,z)=\frac{\pi i}{\beta(u)} W_2(u,z) .\end{gather*}
Then we have
\begin{gather}\label{3:connect1}
W_4\big(u,ze^{\pi i}\big)=e^{\pi i(1-\mu)} W_4(u,z)+\pi i W_3(u,z) .
\end{gather}
Moreover, \eqref{2:beta} shows that $W_4$ shares the asymptotic expansion \eqref{1:W2}, \eqref{1:W2a} with~$W_2$.
From~\eqref{3:connect1} we obtain
\begin{gather}\label{3:connect}
W_4\big(u,ze^{\pi im}\big)=e^{\pi i (1-\mu)m} W_4(u,z)+\pi i\frac{\sin(\pi(\mu+1)m)}{\sin(\pi(\mu+1))} W_3(u,z)
\end{gather}
for every integer $m$.
We will use~\eqref{3:connect} and the asymptotic expansions \eqref{1:W1}, \eqref{1:W2} for $|\arg z|\le \frac12\pi$
to prove that in~\eqref{1:W2a} we can remove the restriction on~$\arg z$ completely.

\begin{Theorem}\label{3:t1}
Suppose that $\Re\mu\ge0$. For every $N=1,2,3,\dots$, $W_2(u,z)$ can be written as the right-hand side of~\eqref{1:W2},
and \eqref{1:W2a} holds without a restriction on~$\arg z$:
\begin{gather*}
|g_2(u,z)|+|h_2(u,z)|\le \frac{K_2}{u^{2N}}\qquad\text{for} \quad 0<|z|\le R, \quad u\ge u_2.
\end{gather*}
\end{Theorem}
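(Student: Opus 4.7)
My approach is to use the connection formula \eqref{3:connect} to reduce the case of arbitrary $\arg z$ to the already-established range. Given $z$ with $|z|\le R$ and arbitrary $\arg z$, I pick the integer $m$ so that $z_0:=ze^{-\pi im}$ satisfies $|\arg z_0|\le\pi/2$. Then $|z_0|=|z|\le R$ and $z_0$ lies in the original domain $D_2$, so the known expansions \eqref{1:W2} for $W_4:=(\pi i/\beta(u))W_2$ and \eqref{1:W1} for $W_3$ (the latter because $W_3$ differs from $W_1$ by a scalar factor $1+O(u^{-2N})$, as noted in Section~\ref{discussion}) are both available at $z_0$. Substituting them into
\[W_4(u,z)=e^{\pi i(1-\mu)m}W_4(u,z_0)+\pi i\,\frac{\sin(\pi(\mu+1)m)}{\sin(\pi(\mu+1))}\,W_3(u,z_0),\]
and converting every Bessel function at $uz_0=uze^{-\pi im}$ into one at $uz$ via \eqref{3:ancontK} and the relation $I_\nu(uz_0)=e^{-\pi i\nu m}I_\nu(uz)$, yields a linear combination of $K_\mu(uz),K_{\mu+1}(uz),I_\mu(uz),I_{\mu+1}(uz)$ with explicitly computable coefficients.

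Next I would check that this linear combination has the right shape. Using the evenness of $A_s$ (so $A_s(z_0)=A_s(z)$), the oddness of $B_s$ (so $B_s(z_0)=(-1)^mB_s(z)$), and careful bookkeeping of the phase factors $e^{\pi i(1-\mu)m}$ from \eqref{3:connect} together with those from \eqref{3:ancontK}, the $K_\mu(uz)$ coefficient collects into $z\bigl(\sum A_s(z)/u^{2s}+g_2(u,z_0)\bigr)$ and the $K_{\mu+1}(uz)$ coefficient into $-\tfrac{z}{u}\bigl(\sum B_s(z)/u^{2s}+zh_2(u,z_0)\bigr)$, exactly as required. The $I_\nu(uz)$ coefficients receive two contributions each: one from the $I_\nu$-piece of \eqref{3:ancontK} applied to $K_\nu(uz_0)$ inside $W_4(u,z_0)$, and one from the $I_\nu$-expansion of $W_3(u,z_0)$. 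The identity $\sin(\pi(\mu+1)m)/\sin(\pi(\mu+1))=(-1)^{m+1}\sin(\pi\mu m)/\sin(\pi\mu)$ (plus $e^{-2\pi im}=1$) confirms that the sine coefficient chosen in \eqref{3:connect} is exactly what is needed to make the principal $\sum A_s(z)/u^{2s}$ and $\sum B_s(z)/u^{2s}$ parts of these two contributions cancel, leaving only the residual
\[z\pi ie^{-\pi i\mu m}\frac{\sin(\pi\mu m)}{\sin(\pi\mu)}I_\mu(uz)\bigl[g_2(u,z_0)-g_3(u,z_0)\bigr]\]
and the analogous $I_{\mu+1}(uz)$ expression involving $h_2(u,z_0)-h_3(u,z_0)$, where $g_3,h_3$ denote the error terms in the \eqref{1:W1}-expansion of $W_3$.

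To fit the form \eqref{1:W2} I absorb these residual $I_\nu$-terms into the $K_\nu$-error functions via $I_\nu(uz)=(I_\nu(uz)/K_\nu(uz))K_\nu(uz)$; the required bound \eqref{1:W2a} then reduces to showing $|I_\nu(uz)/K_\nu(uz)|$ is uniformly bounded for $u\ge u_2$ and $0<|z|\le R$. Using $K_\nu(uz)=e^{-\pi i\nu m}K_\nu(uz_0)+\pi i\frac{\sin(\pi\nu m)}{\sin(\pi\nu)}I_\nu(uz_0)$ and noting via \eqref{2:asyI}--\eqref{2:asyK} that $|K_\nu(uz_0)/I_\nu(uz_0)|\to 0$ exponentially for $u|z_0|\to\infty$ on the sector $|\arg z_0|\le\pi/2$, the ratio is controlled by the nonzero constant $\pi i\sin(\pi\nu m)/\sin(\pi\nu)$ for generic $\mu$. \textbf{The main obstacle} is the degenerate case $\sin(\pi\mu m)=0$ (rational $\mu$ with suitable $m$), in which that constant vanishes and $|I_\nu/K_\nu|$ is no longer automatically bounded; fortunately the residuals in the previous step then also vanish, since they carry the very same factor $\sin(\pi\mu m)/\sin(\pi\mu)$, so the absorption is vacuous. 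Handling these degenerate cases together with the integer-$\mu$ situation via the appropriate $0/0$ limit, and verifying control of the ratio near the boundary $|\arg z_0|=\pi/2$, constitutes the main technical bookkeeping.
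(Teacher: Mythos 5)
Your reduction via \eqref{3:connect}, the conversion of the Bessel functions at $uz_0$ into ones at $uz$ by \eqref{3:ancontK} and \eqref{3:ancontI}, and the cancellation of the principal parts of the two $I_\nu(uz_0)$-contributions reproduce exactly the paper's computation (its equation \eqref{3:eq4}, with its $z$ playing the role of your $z_0$ and its $z_1$ the role of your $z$); up to that point your bookkeeping, including the parity argument for $A_s$, $B_s$, is correct. The gap is in the final step, where you absorb the residual $I_\nu(uz)$-terms by writing $I_\nu(uz)=\bigl(I_\nu(uz)/K_\nu(uz)\bigr)K_\nu(uz)$ and claim the ratio is controlled. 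It is not, and the obstruction is not the degenerate case $\sin(\pi\mu m)=0$ nor mere ``bookkeeping near $|\arg z_0|=\pi/2$'': for $m\ne0$ the function $w\mapsto K_\nu\bigl(we^{\pi im}\bigr)=e^{-\pi i\nu m}K_\nu(w)-\pi i\frac{\sin(\pi\nu m)}{\sin(\pi\nu)}I_\nu(w)$ is a nontrivial combination $aI_\nu+bK_\nu$ with $a\ne0$, and every such combination has infinitely many zeros lying asymptotically along the imaginary $w$-axis (this is visible already from the leading asymptotics \eqref{2:asyI}, \eqref{2:asyK} continued to $\arg w=\pm\frac12\pi$). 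Hence for all large $u$ there are points $z$ with $0<|z|\le R$ at which $K_\mu(uz)=0$ while $I_\mu(uz)\ne0$ (the two are independent solutions of Bessel's equation), so $I_\mu(uz)/K_\mu(uz)$ is unbounded on the very domain the theorem is about. These zeros are precisely the reason Olver expected the range $|\arg z|\le\frac32\pi$ to be sharp, so they are the heart of the matter rather than a technicality.

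The ingredient your plan is missing is the paper's use of the Wronskian-type identity \eqref{3:relation}: after continuation it gives $uz_1K_\mu(uz_1)e^{\pi i(\mu+1)m}I_{\mu+1}(uz)+uz_1K_{\mu+1}(uz_1)e^{\pi i\mu m}I_\mu(uz)=1$, a bounded ``partition of unity'' that lets one distribute each residual over \emph{both} basis functions $K_\mu(uz_1)$ and $K_{\mu+1}(uz_1)$ simultaneously, instead of dividing by either one. The resulting coefficients $G_j$, $H_j$ are products $I_\alpha(uz)K_\beta(uz)$ evaluated in the principal sector $|\arg(uz)|\le\frac12\pi$, where Olver's estimates \eqref{3:olver1}, \eqref{3:olver2} bound them by $O(1)$ and $O(u^2)$; the loss of $u^2$ only costs one term of the expansion, which is harmless since $N$ is arbitrary, and no division by a possibly vanishing $K$ ever occurs (by \eqref{3:relation} the two $K$'s cannot vanish together). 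You would also need the separate treatment of $\mu=0$, where \eqref{3:olver1} carries a logarithmic factor and the paper instead uses \eqref{3:K0} to bound $K_0(uz)/K_0(uz_1)$ for small $|uz|$.
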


\begin{proof}
Without loss of generality we replace $W_2$ by $W_4$. We assume that $|\arg z|\le \frac12\pi$, $0<|z|\le R$, $u>0$, $m$ is an integer and $z_1:=ze^{\pi im}$.
We insert~\eqref{1:W1}, \eqref{1:W2} on the right-hand side of~\eqref{3:connect}.
Using~\eqref{3:ancontK} we obtain
\begin{gather}\label{3:eq4}
W_4(u,z_1)=z_1K_\mu(uz_1)\sum_{s=0}^{N-1} \frac{A_s(z_1)}{u^{2s}}
-\frac{z_1}{u}K_{\mu+1}(uz_1)\sum_{s=0}^{N-1} \frac{B_s(z_1)}{u^{2s}}+f(u,z),
\end{gather}
where
\begin{gather*}
 f=E_1g_2+E_2h_2+E_3g_1+E_4h_1,
\end{gather*}
with
\begin{alignat*}{3}
& E_1(u,z) = e^{-\pi i(\mu+1)m} zK_\mu(uz),\qquad &&
 E_2(u,z) = -e^{-\pi i (\mu+1)m}\frac{z^2}{u} K_{\mu+1}(uz),&\\
& E_3(u,z) = \pi i\frac{\sin(\pi(\mu+1)m)}{\sin(\pi(\mu+1))}z I_\mu(uz),\qquad&&
 E_4(u,z) = \pi i \frac{\sin(\pi (\mu+1)m)}{\sin(\pi (\mu+1))}\frac{z^2}{u}I_{\mu+1}(uz) .&
\end{alignat*}
We will construct functions $G_j(u,z)$ and $H_j(u,z)$ such that
\begin{gather*} E_j(u,z)=z_1 K_\mu(uz_1) G_j(u,z)-\frac{z_1^2}{u} K_{\mu+1}(uz_1) H_j(u,z)
\end{gather*}
for $j=1,2,3,4$.
Then \eqref{3:eq4} becomes
\begin{gather}
W_4(u,z_1)=z_1K_\mu(uz_1)\left(\sum_{s=0}^{N-1} \frac{A_s(z_1)}{u^{2s}}+g_3(u,z)\right)\nonumber\\
\hphantom{W_4(u,z_1)=}{}
-\frac{z_1}{u}K_{\mu+1}(uz_1)\left(\sum_{s=0}^{N-1} \frac{B_s(z_1)}{u^{2s}}+z_1h_3(u,z)\right),\label{4:eq5}
\end{gather}
where
\begin{gather*}
g_3 = G_1 g_2+G_2h_2+G_3 g_1+G_4 h_1,\qquad
h_3 = H_1 g_2+H_2h_2+H_3 g_1+H_4 h_1.
\end{gather*}
We now use \cite[(10.28.2)]{NIST}
\begin{gather}\label{3:relation}
 K_\mu(x)I_{\mu+1}(x)+K_{\mu+1}(x)I_\mu(x)=\frac1x .
\end{gather}
From \eqref{3:relation} and the relation
\begin{gather}\label{3:ancontI}
 I_\mu\big(ze^{\pi im}\big)=e^{\pi i\mu m}I_\mu(z)
\end{gather}
we obtain
\begin{gather*} u z_1K_\mu(uz_1) e^{\pi i (\mu+1)m} I_{\mu+1}(uz)+uz_1K_{\mu+1}(uz_1) e^{\pi i \mu m} I_\mu(uz)=1 .\end{gather*}
Therefore, we can choose
\begin{gather*}
G_1(u,z) = uz K_\mu(uz)I_{\mu+1}(uz),\qquad
H_1(u,z) = -u^2K_\mu(uz)I_\mu(uz) .
\end{gather*}
We set
\begin{gather*}
l_0(x)=\ln\frac{1+2|x|}{|x|},\qquad l_\mu(x)=1\qquad\text{if} \quad \mu\ne 0,
\end{gather*}
and note the estimates \cite[(9.12)]{O}
\begin{alignat}{3}
&|I_\mu(x)K_\mu(x)|\le \frac{C l_\mu(x)}{1+|x|},\qquad&& |I_{\mu+1}(x)K_\mu(x)|\le \frac{C |x|l_\mu(x)}{1+|x|^2},& \label{3:olver1}\\
& |I_{\mu+1}(x)K_{\mu+1}(x)|\le \frac{C}{1+|x|},\qquad&& |I_\mu(x)K_{\mu+1}(x)|\le \frac{C}{|x|}& \label{3:olver2}
\end{alignat}
valid when $|\arg x|\le \frac12\pi$ with $C$ independent of $x$.
At this point we assume that $\mu\ne0$ (the case $\mu=0$ is mentioned at the end of the proof). The estimates~\eqref{3:olver1} give
\begin{gather}\label{3:est1}
|G_1(u,z)|\le C,\qquad |H_1(u,z)|\le C u^2.
\end{gather}
Similarly, we choose
\begin{gather*}
G_2(u,z) = -z^2K_{\mu+1}(uz)I_{\mu+1}(uz),\qquad
H_2(u,z) = uzK_{\mu+1}(uz)I_\mu(uz).
\end{gather*}
The estimates \eqref{3:olver2} give
\begin{gather}\label{3:est2}
|G_2(u,z)|\le C|z|^2,\qquad|H_2(u,z)|\le C.
\end{gather}
It follows from \eqref{3:ancontK} that
\begin{gather*}
E_3(u,z) = -E_1(u,z)+ z_1K_\mu(uz_1),\qquad
E_4(u,z) = -E_2(u,z)-\frac{z_1^2}{u}K_{\mu+1}(uz_1) .
\end{gather*}
Therefore, we can choose
\begin{alignat*}{3}
& G_3(u,z) = 1-G_1(u,z) ,\qquad &&
H_3(u,z) = -H_1(u,z),&\\
& G_4(u,z) = -G_2(u,z),\qquad&&
H_4(u,z) = 1-H_2(u,z).&
\end{alignat*}
From \eqref{3:est1}, \eqref{3:est2}, we get
\begin{alignat}{3}
& |G_3(u,z)|\le C+1,\qquad && |H_3(u,z)|\le C u^2,& \label{3:est3}\\
& |G_4(u,z)|\le C|z|^2,\qquad && |H_4(u,z)|\le C+1 .&\label{3:est4}
\end{alignat}
The estimates \eqref{3:est1}, \eqref{3:est2}, \eqref{3:est3}, \eqref{3:est4}
give
\begin{gather*}
|g_3(u,z)| \le C |g_2(u,z)| + C|z|^2|h_2(u,z)|+(C+1)|g_1(u,z)|+C|z|^2 |h_1(u,z)|,\\
|h_3(u,z)| \le C u^2|g_2(u,z)| + C|h_2(u,z)|+Cu^2|g_1(u,z)|+(C+1) |h_1(u,z)|.
\end{gather*}
Since we assumed that
\begin{gather*}
|g_1(u,z)|+|h_1(u,z)|+|g_2(u,z)|+|h_2(u,z)|\le \frac{K}{u^{2N}}
\end{gather*}
for $| \arg z|\le \frac12\pi$, $0<|z|\le R$, $u\ge u_0$,
the expansion \eqref{4:eq5} has the desired form with $N$ replaced by $N-1$.

Suppose $\mu=0$.
We use \cite[(10.31.2)]{NIST}
\begin{gather}\label{3:K0}
K_0(x)=-\left(\ln\left(\frac12x\right)+\gamma\right)I_0(x)+\frac{\frac14x^2}{(1!)^2}+\left(1+\frac12\right)
\frac{\left(\frac14x^2\right)^2}{\left(2!\right)^2}+\cdots.
\end{gather}
It follows from \eqref{3:K0} that there exist positive constants $r>0$, $D>0$ such that
\begin{gather*}
\frac{|K_0(x)|}{|K_0(xe^{\pi i m})|}\le D \qquad\text{for} \quad 0<|x|\le r, \quad |\arg x|\le\frac12\pi, \quad m\in\mathbb{Z}.
\end{gather*}
Then we set
\begin{gather*}
G_1(u,z)=\frac{K_0(uz)}{K_0(uz_1)},\qquad H_1(u,z)=0\qquad\text{if} \quad 0<|uz|\le r
\end{gather*}
with $G_1$ and $H_1$ the same as before when $|uz|>r$. The estimates~\eqref{3:est1} are valid with a suitable constant~$C$.
The rest of the proof is unchanged.
This completes the proof of the theorem.
\end{proof}

\section[Extension to complex $u$]{Extension to complex $\boldsymbol{u}$}\label{extension}

So far we considered only $0<u\to\infty$. Now we set $u=te^{i\theta}$, where $t>0$ and $\theta\in\R$.
In~\eqref{1:eq1} we substitute $z=e^{-i\theta} x$, $\tilde w(x)=w(z)$.
Then we obtain the dif\/ferential equation
\begin{gather}\label{4:eq1}
\frac{d^2}{dx^2} \tilde w(x)=\frac{1}{x}\frac{d}{dx} \tilde w(x) +\left(t^2 +\frac{\mu^2-1}{x^2}+e^{-2i\theta}f\big(e^{-i\theta} x\big)\right)\tilde w(x) .
\end{gather}
Assuming $\Re\mu\ge0$, we can apply Olver's theory to this equation, and obtain functions $\tilde W_1(t,x)$ and $\tilde W_2(t,x)$.
Since we assumed that $f(z)$ is analytic in the disk $\{z\colon |z|<R_0\}$, the new function
$\tilde f(x)= e^{-2i\theta}f(e^{-i\theta} x)$ is analytic in the same disk. Therefore, the domains $D_1$, $D_2$ are the same as before.
The functions~$\tilde A_s(x)$, $\tilde B_s(x)$ that appear in place of~$A_s(z)$,~$B_s(z)$ satisfy
\begin{gather*}
\tilde A_s(x)=e^{-2si\theta} A_s(z),\qquad \tilde B_s(x)=e^{-(2s+1)i\theta} B_s(z) ,
\end{gather*}
so
\begin{gather*}
\frac{\tilde A_s(x)}{t^{2s}} =\frac{A_s(z)}{u^{2s}},\qquad \frac{\tilde B_s(x)}{t^{2s+1}} =\frac{B_s(z)}{u^{2s+1}}.
\end{gather*}
Therefore, the functions $e^{{-}i\theta} \tilde W_1(t{,}x)$ and $e^{{-}i\theta} \tilde W_2(t{,}x)$ have the asymptotic expan\-sions~\eqref{1:W1},~\eqref{1:W1a} and~\eqref{1:W2},~\eqref{1:W2a} with $(t,x)$ replacing $(u,z)$.

Let $\tilde W_3(t,\mu,x)$ be the function $W_3$ for the dif\/ferential equation~\eqref{4:eq1}.
Then
\begin{gather*}
W_3\big(te^{i\theta},\mu,e^{-i\theta}x\big)=e^{-i\theta} \tilde W_3(t,\mu,x).
\end{gather*}
It follows that $W_3(u,\mu,z)$ can be expanded in the form of the right-hand side of~\eqref{1:W1}, and~\eqref{1:W1a} holds for $0<|z|\le R$
and $u=te^{i\theta}$ for any f\/ixed real~$\theta$.

We would like to connect $\tilde W_2$ to $W_2$ in a similar manner but this is not possible at this point because $W_2(u,z)$ is only def\/ined for
$u>0$, and so we cannot substitute $u=te^{i\theta}$.

\section[Properties of $A_s$, $B_s$]{Properties of $\boldsymbol{A_s}$, $\boldsymbol{B_s}$}\label{AB}

For any $\mu\in\C$ we consider the solution $A_s(z)=A_s(\mu,z)$, $B_s(z)= B_s(\mu,z)$ of the recur\-sion~\eqref{1:eq3},~\eqref{1:eq4}
which is uniquely determined by $A_0(z)=1$ and~\eqref{1:As}.
The following lemma is mentioned by
Olver \cite[p.~327]{O1}, \cite[p.~81, line~6]{O}.

\begin{Lemma}\label{5:l1}
Let $\hat A_s(z)$, $\hat B_s(z)$ be any solution of \eqref{1:eq3}, \eqref{1:eq4} with $\hat A_0(z)=1$. Then, for all $s\ge 0$,
\begin{gather}\label{5:eq1}
\hat A_s(z)=\sum_{r=0}^s A_r(z)\hat A_{s-r}(0),\qquad \hat B_s(z)=\sum_{r=0}^s B_r(z)\hat A_{s-r}(0) .
\end{gather}
\end{Lemma}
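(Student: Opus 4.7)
The natural approach is induction on $s$, exploiting the fact that the recursion \eqref{1:eq3} for $B_s$ has no free constant (the antiderivative is pinned down by the lower limit $0$), whereas the recursion \eqref{1:eq4} for $A_{s+1}$ has one free integration constant. For the canonical $A_s$, that constant is fixed by \eqref{1:As}, i.e.\ $A_s(0)=0$ for $s\ge1$.

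The base case $s=0$ is immediate: $\hat A_0(z)=1=A_0(z)\hat A_0(0)$, and since $\hat B_0$ is produced from $\hat A_0=1$ via \eqref{1:eq3}, which involves only a definite integral from $0$ and no free constant, one has $\hat B_0(z)=B_0(z)=B_0(z)\hat A_0(0)$.

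For the inductive step, suppose \eqref{5:eq1} holds for indices $0,\ldots,s$. By \eqref{1:eq4} applied to the hat-family and linearity in $\hat B_s$,
\begin{gather*}
2\hat A_{s+1}(z)=\sum_{r=0}^{s}\hat A_{s-r}(0)\left(\frac{2\mu+1}{z}B_r(z)-B_r'(z)+\int f(z)B_r(z)\,dz\right)+2C,
\end{gather*}
where $2C$ accounts for the free integration constants. By \eqref{1:eq4} for the canonical family, the parenthesised expression equals $2A_{r+1}(z)$ up to its own (canonical) integration constant; absorbing those into $C$, we obtain
\begin{gather*}
\hat A_{s+1}(z)=\sum_{r=1}^{s+1}A_r(z)\hat A_{s+1-r}(0)+C.
\end{gather*}
Now evaluate at $z=0$: since $A_r(0)=0$ for $r\ge1$ by \eqref{1:As}, the sum vanishes, so $C=\hat A_{s+1}(0)=A_0(z)\hat A_{s+1}(0)$, which furnishes the missing $r=0$ term and proves the first identity at index $s+1$.

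For the companion identity, apply \eqref{1:eq3} at index $s+1$ to $\hat A_{s+1}$. Because \eqref{1:eq3} is linear in $\hat A_{s+1}$ and uses the definite integral $\int_0^z$, no further constant enters; substituting the just-proved expansion of $\hat A_{s+1}$ and using \eqref{1:eq3} for the canonical $A_r,B_r$ yields $\hat B_{s+1}(z)=\sum_{r=0}^{s+1}B_r(z)\hat A_{s+1-r}(0)$ by linearity. The only delicate point in the whole argument is the correct identification of the integration constant $C$ in \eqref{1:eq4}; the normalisation \eqref{1:As} makes this painless by reducing it to an evaluation at $z=0$.
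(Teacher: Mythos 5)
Your proof is correct and rests on the same two structural facts as the paper's: linearity of the recursion \eqref{1:eq3}, \eqref{1:eq4} and the fact that the only freedom (the integration constant in \eqref{1:eq4}) is pinned down by the value at $z=0$, which \eqref{1:As} normalises. The paper merely runs the argument in the opposite direction---it verifies that the right-hand sides of \eqref{5:eq1} form a solution of the recursion and invokes uniqueness given $\hat A_s(0)$---whereas you unfold the recursion by induction; these are the same proof.
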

\begin{proof}
Let us denote the right-hand sides of equations \eqref{5:eq1} by $A_s^\ast(z)$, $B_s^\ast(z)$, respectively.
It is easy to show that $A_s^\ast(z)$, $B_s^\ast(z)$ is a solution of~\eqref{1:eq3},~\eqref{1:eq4}. Since $A_0^\ast(z)=1$ and $A_s^\ast(0)=\hat A_s(0)$, this solution must
agree with~$\hat A_s(z)$,~$\hat B_s(z)$.
\end{proof}

We now def\/ine $a_0(z)=1$ and, for $s\ge 0$,
\begin{gather}
 a_{s+1}(z) := A_{s+1}(-\mu,z)+\frac{2\mu}{z} B_s(-\mu,z),\label{5:a}\\
 b_s(z) := B_s(-\mu,z) .\label{5:b}
\end{gather}

\begin{Theorem}\label{5:t1}
The functions $a_s(z)$, $b_s(z)$ satisfy \eqref{1:eq3}, \eqref{1:eq4} with $A_s$, $B_s$ replaced by $a_s$, $b_s$, respectively, and, for all $s\ge 0$,
\begin{gather}
a_s(z) = A_s(\mu,z)+2\mu\sum_{r=0}^{s-1} A_r(\mu,z)B_{s-1-r}'(-\mu,0),\label{5:c1}\\
b_s(z) = B_s(\mu,z)+2\mu\sum_{r=0}^{s-1} B_r(\mu,z)B_{s-1-r}'(-\mu,0).\label{5:c2}
\end{gather}
\end{Theorem}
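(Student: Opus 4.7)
The plan is in two stages: first, verify by direct calculation that $a_s$, $b_s$ satisfy \eqref{1:eq3}, \eqref{1:eq4} (with parameter $\mu$); then invoke Lemma~\ref{5:l1} to derive the closed formulas \eqref{5:c1}, \eqref{5:c2}.

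The verification of \eqref{1:eq4} is short. The $-\mu$ version of \eqref{1:eq4} reads
\begin{gather*}
2A_{s+1}(-\mu, z) = \frac{1-2\mu}{z} B_s(-\mu, z) - B_s'(-\mu, z) + \int f(z) B_s(-\mu, z)\, dz.
\end{gather*}
Adding $\frac{4\mu}{z} B_s(-\mu, z)$ to both sides and using \eqref{5:a}, \eqref{5:b} turns this into
\begin{gather*}
2 a_{s+1}(z) = \frac{2\mu+1}{z} b_s(z) - b_s'(z) + \int f(z) b_s(z)\, dz,
\end{gather*}
which is exactly \eqref{1:eq4} for $a_s$, $b_s$. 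The verification of \eqref{1:eq3} is trivial for $s=0$. For $s\ge 1$ we substitute $a_s(z) = A_s(-\mu, z) + \frac{2\mu}{z} B_{s-1}(-\mu, z)$, use the $-\mu$ version of \eqref{1:eq3} to replace $2 B_s(-\mu, z) + A_s'(-\mu, z)$, and reduce the claim to an identity between expressions involving $B_{s-1}(-\mu)$, $B_{s-1}'(-\mu)$, $A_s'(-\mu)$ and $f B_{s-1}(-\mu)$. Differentiating the $-\mu$ version of \eqref{1:eq4} at index $s-1$ lets us eliminate $f B_{s-1}(-\mu)$ in favor of $A_s'(-\mu)$, $B_{s-1}'(-\mu)$ and $B_{s-1}''(-\mu)$; the $A_s'$-contributions cancel, and the surviving integrand turns out to be the exact derivative of $\frac{1}{t} B_{s-1}'(-\mu, t) - \frac{1}{t^2} B_{s-1}(-\mu, t)$. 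Since $B_{s-1}(-\mu, \cdot)$ is odd this expression has limit $0$ at $t=0$, and the boundary terms at $t=z$ exactly cancel the residual non-integral pieces.

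With the recursion established and $a_0(z)=1$, Lemma~\ref{5:l1} applied to the pair $a_s, b_s$ gives
\begin{gather*}
a_s(z) = \sum_{r=0}^{s} A_r(\mu, z)\, a_{s-r}(0), \qquad b_s(z) = \sum_{r=0}^{s} B_r(\mu, z)\, a_{s-r}(0).
\end{gather*}
To evaluate $a_{s-r}(0)$ I use \eqref{1:As} (applied with $-\mu$) to get $A_{s-r}(-\mu, 0)=0$ for $s-r\ge 1$, together with the oddness of $B_{s-r-1}(-\mu, \cdot)$, which gives $B_{s-r-1}(-\mu, z)/z \to B_{s-r-1}'(-\mu, 0)$. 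Hence $a_{s-r}(0) = 2\mu B_{s-r-1}'(-\mu, 0)$ for $s-r\ge 1$, while $a_0(0) = 1$. Isolating the $r=s$ term and relabelling the summation index then produces \eqref{5:c1} and, identically, \eqref{5:c2}.

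The principal obstacle is the bookkeeping in the second half of Stage~1: keeping track of the $1/t^k$ singularities, recognizing that the extra integrand collapses to an exact derivative, and confirming via parity that the boundary contribution at $t=0$ vanishes. The rest of the argument is essentially a direct application of Lemma~\ref{5:l1}.
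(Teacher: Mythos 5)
Your proposal is correct and follows essentially the same route as the paper: verify \eqref{1:eq4} by adding $\frac{4\mu}{z}B_s(-\mu,z)$ to the $-\mu$ recursion, verify \eqref{1:eq3} by combining the $-\mu$ recursions at indices $s$ and $s-1$ and checking that the extra terms cancel (the paper phrases this as a direct computation $H+G=0$ rather than an exact-derivative/boundary-term argument, but the content is the same), and then apply Lemma~\ref{5:l1} with $a_{s-r}(0)=2\mu B_{s-r-1}'(-\mu,0)$ obtained from \eqref{1:As} and the oddness of $B_{s-r-1}$. The only difference is cosmetic: the paper makes the evaluation of $a_s(0)$ implicit, while you spell it out.
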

\begin{proof}
We have
\begin{gather*} 2 A_{s+1}(-\mu,z)=\frac{-2\mu+1}{z} B_s(-\mu,z)- B_s'(-\mu,z)+\int f(z) B_s(-\mu,z) dz .\end{gather*}
We add $\frac{4\mu}{z} B_s(-\mu,z)$ on both sides and get
\begin{gather}\label{5:eq2}
2a_{s+1}(z)=\frac{2\mu+1}{z}b_s(z)-b_s'(z)+\int f(z)b_s(z) dz .
\end{gather}
This is \eqref{1:eq4} for $a_s(z)$, $b_s(z)$.

Equation \eqref{1:eq3} is true for $a_s(z)$, $b_s(z)$ when $s=0$. Suppose $s\ge 1$.
We have
\begin{gather*} 2 B_s'(-\mu,z)=- A_s''(-\mu,z)+f(z) A_s(-\mu,z)+\frac{2\mu-1}{z} A_s'(-\mu,z) .\end{gather*}
Using the def\/initions of $a_s(z)$, $b_s(z)$ we get
\begin{gather}\label{5:eq3}
 2b_s'(z)=-a_s''(z)+f(z)a_s(z)-\frac{2\mu+1}{z}a_s'(z)+\frac{4\mu}{z}a_s'(z)+G,
\end{gather}
where
\begin{gather*} G:=\frac{d^2}{dz^2}\left(\frac{2\mu}{z} b_{s-1}(z)\right)-f(z)\frac{2\mu}{z}b_{s-1}(z)
-\frac{2\mu-1}{z}\frac{d}{dz}\left(\frac{2\mu}{z} b_{s-1}(z)\right) .
\end{gather*}
In \eqref{5:eq3} we replace $\frac{4\mu}{z}a_s'(z)$ through~\eqref{5:eq2}. Then we obtain
\begin{gather}\label{5:eq4}
2b_s'(z)=-a_s''(z)+f(z)a_s(z)-\frac{2\mu+1}{z}a_s'(z)+H+G,
\end{gather}
where
\begin{gather*}
H:=\frac{2\mu}{z}\left[\frac{d}{dz}\left(\frac{2\mu+1}{z}b_{s-1}(z)\right)-b_{s-1}''(z)+f(z)b_{s-1}(z)\right] .
\end{gather*}
By direct computation, we show $H+G=0$ for any function $b_{s-1}(z)$. Therefore, by integra\-ting~\eqref{5:eq4}
noting that $a_s(z)$ is even and $b_s(z)$ is odd, we obtain \eqref{1:eq3} for $a_s(z)$, $b_s(z)$.

We now get \eqref{5:c1}, \eqref{5:c2} from Lemma \ref{5:l1}.
\end{proof}

Using multiplication of formal series, we can write \eqref{5:c1}, \eqref{5:c2} as
\begin{gather}
F(u,-\mu)\sum_{s=0}^\infty \frac{A_s(z)}{u^{2s}} = \sum_{s=0}^\infty \frac{a_s(z)}{u^{2s}},\label{5:formal1}\\
F(u,-\mu)\sum_{s=0}^\infty \frac{B_s(z)}{u^{2s}} = \sum_{s=0}^\infty \frac{b_s(z)}{u^{2s}},\label{5:formal2}
\end{gather}
where
\begin{gather*}
F(u,\mu)=1-2\mu\sum_{s=0}^\infty \frac{B_s'(\mu,0)}{u^{2s+2}}.
\end{gather*}
We dif\/ferentiate \eqref{5:c2} with respect to $z$ and set $z=0$.
Then we f\/ind
\begin{gather*}
B_s'(-\mu,0)=B_s'(\mu,0)+2\mu\sum_{r=0}^{s-1} B_r'(\mu,0)B_{s-1-r}'(-\mu,0),
\end{gather*}
or, equivalently,
\begin{gather}\label{5:reciprocal}
F(u,\mu)F(u,-\mu)=1 .
\end{gather}
In particular, it follows that
\begin{gather}
F(u,\mu)\sum_{s=0}^\infty \frac{a_s(z)}{u^{2s}}=\sum_{s=0}^\infty \frac{A_s(z)}{u^{2s}},\label{5:formal3}\\
F(u,\mu)\sum_{s=0}^\infty \frac{b_s(z)}{u^{2s}}=\sum_{s=0}^\infty \frac{B_s(z)}{u^{2s}}.\label{5:formal4}
\end{gather}

\section[Asymptotic expansion of $W_3$ when $\Re\mu<0$]{Asymptotic expansion of $\boldsymbol{W_3}$ when $\boldsymbol{\Re\mu<0}$} \label{W3}

In Section \ref{discussion} we saw that $W_3(u,\mu,z)$ can be written as the right-hand side of~\eqref{1:W1}, and~\eqref{1:W1a} holds.
However, this was proved only when $\Re\mu\ge 0$. Now we remove this restriction.

\begin{Theorem}\label{6:t1}
Suppose that $\mu\in\C$ is not a negative integer, and $u=te^{i\theta}$ with $t>0$, $\theta\in\R$.
Then $W_3(u,\mu,z)$ can be written as the right-hand side of~\eqref{1:W1} and, for each $R>0$ and $N\ge 1$, there are constants $L_1$ and $t_1$ such that
\begin{gather*}
|g_1(u,z)|+|h_1(u,z)|\le \frac{L_1}{t^{2N}}\qquad \text{for} \quad 0<|z|\le R, \quad t\ge t_1.
\end{gather*}
\end{Theorem}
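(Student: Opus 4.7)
The case $\Re\mu\ge 0$ has already been disposed of: the discussion following Lemma~\ref{2:l1} shows that $W_3(u,\mu,z)$ satisfies \eqref{1:W1}, \eqref{1:W1a} for every $\arg z$, and Section~\ref{extension} extends this to arbitrary $u=te^{i\theta}$. I therefore focus on the case $\Re\mu<0$; combined with the hypothesis that $\mu$ is not a negative integer, this forces $\mu$ not to be an integer, so in particular $\sin(\pi\mu)\ne 0$.

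The strategy is to bootstrap from the parameter $-\mu$. Since $\Re(-\mu)>0$ and $-\mu$ is not an integer, Theorem~\ref{2:t1} applies with the substitution $\mu\mapsto-\mu$, yielding a connection formula
$$W_2(u,-\mu,z)=\gamma^{\ast}(u)\,W_3(u,-\mu,z)+\delta^{\ast}(u)\,W_3(u,\mu,z),$$
where, by \eqref{2:gamma}, \eqref{2:delta}, $\gamma^{\ast}(u)=\pi/(2\sin\pi\mu)+O(u^{-2N})$ and $\delta^{\ast}(u)=-\pi F(u,-\mu)/(2\sin\pi\mu)+O(u^{-2N-2})$, with $F$ as in Section~\ref{AB}. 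I solve for $W_3(u,\mu,z)$ and substitute the expansions of $W_3(u,-\mu,z)$ (the $\Re\mu\ge 0$ case applied to $-\mu$) and of $W_2(u,-\mu,z)$ (Theorem~\ref{3:t1} with $\mu\mapsto-\mu$, valid without restriction on $\arg z$). Using the Bessel-function identities $K_{-\mu}=K_\mu$, $K_{1-\mu}=K_{\mu+1}-(2\mu/(uz))K_\mu$, $I_{-\mu}=I_\mu+(2\sin\pi\mu/\pi)K_\mu$, and the analogous formula for $I_{1-\mu}$, I rewrite everything in the basis $\{I_\mu,I_{\mu+1},K_\mu,K_{\mu+1}\}$. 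The coefficient combinations that naturally arise are $\sum a_s(z)/u^{2s}$ and $\sum b_s(z)/u^{2s}$, with $a_s$, $b_s$ as in Section~\ref{AB}; by \eqref{5:formal3}, \eqref{5:formal4} these equal, modulo $O(u^{-2N})$ truncation errors, $F(u,-\mu)\sum A_s(\mu,z)/u^{2s}$ and $F(u,-\mu)\sum B_s(\mu,z)/u^{2s}$.

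The key algebraic cancellation is the following. Because $\gamma^{\ast}(u)\cdot(2\sin\pi\mu/\pi)=1+O(u^{-2N})$, the $K_\mu$ and $K_{\mu+1}$ contributions of $W_2(u,-\mu,z)$ and $\gamma^{\ast}(u)W_3(u,-\mu,z)$ cancel modulo $O(u^{-2N})$; simultaneously, the $F(u,-\mu)$ factor multiplying the surviving $I_\mu, I_{\mu+1}$ terms is exactly cancelled, via \eqref{5:reciprocal}, by the $F(u,-\mu)$ in $\delta^{\ast}(u)$. What remains after dividing by $\delta^{\ast}(u)$ is
$$W_3(u,\mu,z)=zI_\mu(uz)\sum_{s=0}^{N-1}\frac{A_s(\mu,z)}{u^{2s}}+\frac{z}{u}I_{\mu+1}(uz)\sum_{s=0}^{N-1}\frac{B_s(\mu,z)}{u^{2s}}+R(u,z),$$
where $R$ consists of $O(u^{-2N})$ multiples of $zI_\mu$, $(z/u)I_{\mu+1}$ together with $O(u^{-2N})$ multiples of residual $zK_\mu$, $(z/u)K_{\mu+1}$ expressions. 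The $K$-type residuals are recast in $I$-form by exactly the device used in the proof of Theorem~\ref{3:t1}: the Wronskian identity \eqref{3:relation} permits the rewriting, and the uniform bounds \eqref{3:olver1}, \eqref{3:olver2} then yield the required estimate, at the cost of replacing $N$ by $N-1$. The extension to $u=te^{i\theta}$ is immediate from Section~\ref{extension}. The main obstacle is bookkeeping: the identities of Section~\ref{AB} are formal, so one must verify that truncating every series at order $N$ generates no error beyond $O(u^{-2N})$ and that the conversion of residual $K$-terms into $I$-terms does not spoil the decay rate.
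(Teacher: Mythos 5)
Your proposal follows the paper's proof of Theorem~\ref{6:t1} essentially verbatim, up to the relabelling $\mu\leftrightarrow-\mu$: the paper likewise solves the connection formula \eqref{2:connectW2} for the $W_3$ whose parameter has negative real part, inserts the expansions \eqref{1:W1} and \eqref{1:W2}, regroups via \eqref{6:eq2}, \eqref{6:eq5}, \eqref{6:eq7} together with the formal identities \eqref{5:formal3}, \eqref{5:formal4} and \eqref{2:delta}, converts the residual terms with the Wronskian device of Theorem~\ref{3:t1}, and divides by $\delta(u)$. The one point to watch is that the bounds \eqref{3:olver1}, \eqref{3:olver2} underlying the conversion hold only for $|\arg(uz)|\le\frac12\pi$, so (as the paper does) you should carry out the conversion in that sector and remove the restriction on $\arg z$ only at the very end via \eqref{3:ancontI} and the monodromy $W_3(e^{\pi i m}z)=e^{\pi i(\mu+1)m}W_3(z)$, rather than invoking the full-surface expansion of $W_2$ from Theorem~\ref{3:t1} at the outset.
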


\begin{proof}
In Sections \ref{discussion} and \ref{extension} we proved this statement for $\Re\mu\ge 0$. Therefore, it will be suf\/f\/icient to treat $W_3(u,-\mu,z)$ with $\Re\mu>0$.
By the considerations in Section~\ref{extension}, it is suf\/f\/icient to consider $\theta=0$, so $u>0$.
Suppose $|\arg z|\le \frac12\pi$, $0<|z|\le R$. By~\eqref{2:connectW2}, we have
\begin{gather}\label{6:eq1}
c \delta(u)W_3(u,-\mu,z)=c W_2(u,\mu,z)-c\gamma(u)W_3(u,\mu,z),
\end{gather}
where $c=\frac{2}{\pi}\sin(\pi\mu)$. On the right-hand side of \eqref{6:eq1} we insert the expansions \eqref{1:W1} for~$W_3$ and~\eqref{1:W2} for~$W_2$. Taking into account \eqref{2:gamma}, we can expand $-c\gamma(u)W_3(u,\mu,z)$ the same way as~$W_3$. Then using \cite[(10.27.4)]{NIST}
\begin{gather}\label{6:eq2}
K_\nu(x)=\frac{\pi}{2\sin(\pi\nu)}\left(I_{-\nu}(x)-I_\nu(x)\right),
\end{gather}
we obtain
\begin{gather}\label{6:eq3}
c\delta(u)W_3(u,-\mu,z)=zI_{-\mu}(uz)\sum_{s=0}^{N-1}\frac{A_s(z)}{u^{2s}}+\frac{z}{u}I_{-\mu-1}(uz)\sum_{s=0}^{N-1} \frac{B_s(z)}{u^{2s}}+f(u,z),
\end{gather}
where
\begin{gather*}
 f=E_1g_2+E_2h_2+E_3g_1+E_4h_1
\end{gather*}
with
\begin{alignat*}{3}
& E_1(u,z) = c zK_\mu(uz),\qquad && E_2(u,z) = -c\frac{z^2}{u} K_{\mu+1}(uz),& \\
& E_3(u,z) = z I_\mu(uz),\qquad && E_4(u,z) = \frac{z^2}{u}I_{\mu+1}(uz) . &
\end{alignat*}
We will construct functions $G_j(u,z)$ and $H_j(u,z)$ such that
\begin{gather*}
E_j(u,z)=z I_{-\mu}(uz) G_j(u,z)+\frac{z^2}{u} I_{1-\mu}(uz) H_j(u,z)
\end{gather*}
for $j=1,2,3,4$.
Also using \cite[(10.29.1)]{NIST}
\begin{gather}\label{6:eq5}
I_{\nu-1}(x)-I_{\nu+1}(x)=\frac{2\nu}{x}I_\nu(x),
\end{gather}
\eqref{6:eq3} becomes
\begin{gather}
c\delta(u)W_3(u,-\mu,z) = zI_{-\mu}(uz)\left(\sum_{s=0}^{N-1} \frac{\tilde A_s(z)}{u^{2s}}+g_3(u,z)\right)\nonumber\\
\hphantom{c\delta(u)W_3(u,-\mu,z) =}{} +\frac{z}{u}I_{1-\mu}(uz)\left(\sum_{s=0}^{N-1} \frac{B_s(z)}{u^{2s}}+zh_3(u,z)\right),\label{6:eq6}
\end{gather}
where
\begin{gather*} \tilde A_0(z)=1,\qquad \tilde A_s(z)=A_s(z)-\frac{2\mu}{z}B_{s-1}(z) \qquad \text{for} \quad s=1,\dots,N-1,
\end{gather*}
and
\begin{gather*}
g_3 = -\frac{2\mu}{z}B_{N-1}(z)u^{-2N}+ G_1 g_2+G_2h_2+G_3 g_1+G_4 h_1,\\
h_3 = H_1 g_2+H_2h_2+H_3 g_1+H_4 h_1.
\end{gather*}
The identities \eqref{3:relation} and \cite[(10.29.1)]{NIST}
\begin{gather}\label{6:eq7}
K_{\nu-1}(x)-K_{\nu+1}(x)=-\frac{2\nu}{x}K_\nu(x)
\end{gather}
give
\begin{gather*}
uzI_{-\mu}(uz)\left(K_{\mu+1}(uz)-\frac{2\mu}{uz}K_\mu(uz)\right)+uzI_{1-\mu}(uz)K_\mu(uz)=1 .
\end{gather*}
Therefore, we can choose
\begin{gather*}
G_3(u,z) = uz\left(K_{\mu+1}(uz)-\frac{2\mu}{uz}K_\mu(uz)\right)I_\mu(uz),\\
H_3(u,z) = u^2K_\mu(uz)I_\mu(uz) .
\end{gather*}
The estimates \eqref{3:olver1}, \eqref{3:olver2} give
\begin{gather}\label{6:est1}
|G_3(u,z)|\le C_3,\qquad |H_3(u,z)|\le D_3 u^2.
\end{gather}
Similarly, we choose
\begin{gather*}
G_4(u,z) = z^2\left(K_{\mu+1}(uz)-\frac{2\mu}{uz}K_\mu(z)\right)I_{\mu+1}(uz),\\
H_4(u,z) = uzK_\mu(uz)I_{\mu+1}(uz),
\end{gather*}
and estimate
\begin{gather}\label{6:est2}
|G_4(u,z)|\le C_4|z|^2,\qquad|H_4(u,z)|\le D_4.
\end{gather}
It follows from \eqref{6:eq2}, \eqref{6:eq5} that
\begin{gather*}
E_1(u,z) = zI_{-\mu}(uz)-E_3(u,z),\\
E_2(u,z) = \frac{z^2}{u}\left(-\frac{2\mu}{uz}I_{-\mu}(uz)+I_{-\mu+1}(uz)\right)-E_4(u,z) .
\end{gather*}
Therefore, we can choose
\begin{alignat*}{3}
& G_1(u,z) = 1-G_3(u,z) ,\qquad && H_1(u,z) = -H_3(u,z),& \\
& G_2(u,z) = -\frac{2\mu}{u^2}-G_4(u,z),\qquad && H_2(u,z) = 1-H_4(u,z).&
\end{alignat*}
From \eqref{6:est1}, \eqref{6:est2}, we get
\begin{alignat}{3}
& |G_1(u,z)|\le C_1,\qquad && |H_1(u,z)|\le D_1 u^2,& \label{6:est3}\\
&|G_2(u,z)|\le C_2(1+|z|^2),\qquad && |H_2(u,z)|\le D_2 .&\label{6:est4}
\end{alignat}
Since we know that
\begin{gather*}
|g_1(u,z)|+|h_1(u,z)|+|g_2(u,z)|+|h_2(u,z)|\le \frac{K}{u^{2N}}
\end{gather*}
for $| \arg z|\le \frac12\pi$, $0<|z|\le R$, $u\ge u_0$,
the estimates \eqref{6:est1}, \eqref{6:est2}, \eqref{6:est3}, \eqref{6:est4}
give
\begin{gather*}
|g_3(u,z)|+|h_3(u,z)|\le \frac{L}{u^{2N-2}}\qquad\text{if} \quad |\arg z|\le \frac12\pi, \quad 0<|z|\le R, \quad u\ge u_3.
\end{gather*}
Now we divide both sides of \eqref{6:eq6} by $c\delta(u)$ and use \eqref{2:delta}, \eqref{5:formal3}, \eqref{5:formal4} (with $\mu$ replaced by $-\mu$).
Then we obtain the desired expansion of $W_3(u,-\mu,z)$ for $\Re\mu<0$ and $|\arg z|\le \frac12\pi$, $0<|z|\le R$.
The restriction on $\arg z$ is easily removed using~\eqref{3:ancontI} and $W_3(e^{\pi im}z)=e^{\pi i (\mu+1)m} W_3(z)$.
\end{proof}

\section{Application to the conf\/luent hypergeometric equation}\label{confluent}
The conf\/luent hypergeometric dif\/ferential equation
\begin{gather*}
xv''(x)+(b-x)v'(x)-av(x)=0
\end{gather*}
has solutions $M(a,b,x)$ and $U(a,b,x)$.
Substituting $x=z^2$, $w=e^{-\frac12 z^2}z^b v$ we obtain the dif\/ferential equation
\begin{gather}\label{M:eq2}
w''(z)=\frac1z w'(z)+\left(u^2+\frac{\mu^2-1}{z^2}+z^2\right)w(z),
\end{gather}
where
\begin{gather}\label{M:a}
 a=\frac14 u^2+\frac12 b, \qquad \mu=b-1.
\end{gather}
Equation \eqref{M:eq2} agrees with~\eqref{1:eq1} when $f(z)=z^2$.
Let $A_s$, $B_s$ be def\/ined as in Section~\ref{Olver} for $f(z)=z^2$. In this case, $A_s(z)$, $B_s(z)$ are polynomials.
Throughout this section, we assume that $a$, $b$, $u$, $\mu$ satisfy \eqref{M:a}.

The function $M(a,b,x)$ is given by a power series in $x$ and $M(a,b,0)=1$. Therefore, the function~$W_3$ associated with~\eqref{M:eq2}
is given by
\begin{gather}\label{M:W3}
W_3(u,\mu,z)=\frac{2^{1-b}u^{b-1}}{\Gamma(b)} e^{-\frac12z^2}z^b M\big(a,b,z^2\big).
\end{gather}
Theorem \ref{6:t1} implies the following theorem.

\begin{Theorem}\label{M:t1}
Suppose that $b\in\C$ is not $0$ or a negative integer, $u=te^{i\theta}$ with $t>0$, $\theta\in\R$, and $N\ge 1$, $R>0$.
Then we can write
\begin{gather}
 \frac{2^{1-b}u^{b-1}}{\Gamma(b)} e^{-\frac12z^2}z^b M\big(\tfrac14 u^2+\tfrac12 b,b,z^2\big)\nonumber\\
\qquad {} = zI_{b-1}(uz)\left(\sum_{s=0}^{N-1} \frac{A_s(z)}{u^{2s}}+g_1(u,z)\right)
 +\frac{z}{u}I_b(uz)\left(\sum_{s=0}^{N-1} \frac{B_s(z)}{u^{2s}}+zh_1(u,z)\right),\label{M:asy1}
\end{gather}
where
\begin{gather*}
|g_1(u,z)|+|h_1(u,z)|\le \frac{L_1}{t^{2N}}\qquad \text{for} \quad 0<|z|\le R, \quad t\ge t_1.
\end{gather*}
and $L_1$, $t_1$ are positive constants independent of $z$ and $u$ $($but possibly depending on~$b$, $\theta$, $N$, $R)$. There is no restriction on~$\arg z$. The polynomials $A_s(z)$, $B_s(z)$ appearing in~\eqref{M:asy1}
are determined by the recursion~\eqref{1:eq3},~\eqref{1:eq4} with $f(z)=z^2$ and the conditions $A_0(z)=1$, $A_s(0)=0$ for $s\ge 1$.
\end{Theorem}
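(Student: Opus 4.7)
The plan is to recognize this as a direct specialization of Theorem~\ref{6:t1} to the ODE \eqref{M:eq2}. First I would check that equation \eqref{M:eq2} is the instance of~\eqref{1:eq1} with $f(z)=z^2$. Since $f$ is entire, we may take $R_0$ (and hence $R$) as large as we please in the choice of the domain~\eqref{1:D}, so the restriction $0<|z|\le R$ is genuinely uniform over any preassigned disk. With $\mu=b-1$, the hypothesis ``$b$ is not $0$ or a negative integer'' translates exactly into ``$\mu$ is not a negative integer'', which is precisely the assumption needed in Theorem~\ref{6:t1}.

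Next I would verify the identification \eqref{M:W3} of $W_3(u,\mu,z)$. Starting from the standard series solution $v=M(a,b,z^2)$ of the confluent hypergeometric equation with $M(a,b,0)=1$, the substitution $w=e^{-z^2/2}z^b v$ produces a solution of~\eqref{M:eq2} whose leading behaviour at the origin is $z^{1+\mu}=z^b$, so this $w$ is a constant multiple of $W_+(u,\mu,z)$ with leading coefficient $1$. By the definition of $W_3$ in Section~\ref{discussion}, multiplication by $2^{-\mu}u^\mu/\Gamma(\mu+1)=2^{1-b}u^{b-1}/\Gamma(b)$ produces $W_3(u,\mu,z)$, yielding~\eqref{M:W3}. (The relations \eqref{M:a} simply encode how $a$ is read off from $u$ and $b$ after substituting $x=z^2$.)

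Then I would invoke Theorem~\ref{6:t1} with $u=te^{i\theta}$: it provides the expansion of $W_3(u,\mu,z)$ in the form~\eqref{1:W1} on the full Riemann surface of the logarithm over $\{0<|z|\le R\}$, with remainders bounded by $L_1/t^{2N}$ for $t\ge t_1$. Substituting~\eqref{M:W3} into that expansion gives~\eqref{M:asy1} verbatim, with the same polynomials $A_s,B_s$ determined by the recursion \eqref{1:eq3}, \eqref{1:eq4} for $f(z)=z^2$ normalized by $A_0=1$, $A_s(0)=0$ for $s\ge1$. The fact that $A_s,B_s$ are actually polynomials follows from $f(z)=z^2$ being a polynomial: the recursions~\eqref{1:eq3}, \eqref{1:eq4} preserve the polynomial class, and the normalization~\eqref{1:As} pins down the antiderivative ambiguity.

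There is essentially no obstacle: the entire content of the theorem has been built up in the earlier sections. The only subtlety is the verification of~\eqref{M:W3}, but this is a one-line consequence of matching leading behaviour at $z=0$, and the fact that the constants $L_1,t_1$ may depend on $b,\theta,N,R$ is exactly what Theorem~\ref{6:t1} delivers.
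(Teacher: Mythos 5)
Your proposal is correct and follows essentially the same route as the paper: identify the substituted Kummer equation as \eqref{1:eq1} with $f(z)=z^2$, establish \eqref{M:W3} by matching the leading behaviour $z^{1+\mu}=z^b$ at the origin so that $e^{-z^2/2}z^bM(a,b,z^2)=W_+(u,\mu,z)$, and then apply Theorem~\ref{6:t1}. The paper's own proof is just the one-line remark that Theorem~\ref{6:t1} implies the result, so your more explicit verification of the dictionary $\mu=b-1$, $a=\tfrac14u^2+\tfrac12b$ and of the polynomial nature of $A_s$, $B_s$ only fills in details the paper leaves implicit.
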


Suppose that $\Re b\ge 1$. Let $W_2(u,z)$ be the function associated with equation~\eqref{M:eq2} which satisf\/ies~\eqref{1:W2},~\eqref{1:W2a}. There are functions $\beta_1(u)$, $\beta_2(u)$ such that
\begin{gather}\label{U:connect}
 W_2(u,z)=\beta_1(u) e^{-\frac12z^2}z^b M\big(a,b,z^2\big)+\beta_2(u) e^{-\frac12z^2}z^b U\big(a,b,z^2\big) .
\end{gather}

The determination of $\beta_1(u)$, $\beta_2(u)$ is not obvious.
It is in this part of the analysis where there is an error in \cite{S}. Slater \cite[p.~79]{S} derives $\beta_2(u)\sim\Gamma(a)2^{b-2}u^{1-b}$, and claims ``we can take $\beta_1(u)=0$'' without proof.
When comparing with~\cite{S}, note that our $\beta_2(u)$ is denoted by~$1/\beta_2(u)$ in~\cite{S}.
Actually, the stated formula for $\beta_2(u)$ is correct but it is only the leading term of the required full asymptotic expansion given in the following lemma.

\begin{Lemma}\label{U:l1}
Suppose $\Re b\ge 1$. For every $N=1,2,3,\dots$, as $0<u\to\infty$,
\begin{gather}\label{U:beta2}
\beta_2(u)=\Gamma(a)2^{b-2}u^{1-b}\left(1+2(1-b)\sum_{s=0}^{N-1}\frac{B_s'(0)}{u^{2s+2}}+O\left(\frac{1}{u^{2N+2}}\right)\right).
\end{gather}
\end{Lemma}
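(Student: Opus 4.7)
The natural plan is to isolate $\beta_2(u)$ from \eqref{U:connect} by studying the limiting behavior of $W_2(u,z)$ as $z\to 0^+$ and matching against Lemma~\ref{2:l3}. The reason this works is that $z^b M(a,b,z^2)$ is the analytic (Frobenius) branch near $z=0$, while $z^b U(a,b,z^2)$ carries the singular branch that matches the leading $z^{\mu-1}$ behavior of $W_2$ predicted by Lemma~\ref{2:l3}.

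First I would write out the near-origin expansion of $z^b U(a,b,z^2)$ via the standard connection formula (DLMF~13.2.42), namely
\begin{gather*}
z^b U\bigl(a,b,z^2\bigr)=\frac{\Gamma(1-b)}{\Gamma(a-b+1)} z^b M\bigl(a,b,z^2\bigr)+\frac{\Gamma(b-1)}{\Gamma(a)} z^{2-b} M\bigl(a-b+1,2-b,z^2\bigr),
\end{gather*}
multiply \eqref{U:connect} by $z^{\mu-1}=z^{b-2}$, and regroup the two $z^bM$ contributions (from $\beta_1$ and from the first piece above) into a single $z^{2\mu}$-multiple of $e^{-z^2/2} M(a,b,z^2)$. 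The remaining piece is
\begin{gather*}
\beta_2(u) \frac{\Gamma(\mu)}{\Gamma(a)} e^{-z^2/2} M\bigl(a-b+1,2-b,z^2\bigr),
\end{gather*}
which tends to $\beta_2(u)\Gamma(\mu)/\Gamma(a)$ as $z\to 0^+$.

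When $\Re\mu>0$, the $z^{2\mu}$ term vanishes in the limit $z\to 0^+$, so comparison with Lemma~\ref{2:l3}(a) yields
\begin{gather*}
\beta_2(u) \frac{\Gamma(\mu)}{\Gamma(a)}=\Gamma(\mu) 2^{\mu-1} u^{-\mu}\biggl(1-2\mu\sum_{s=0}^{N-1}\frac{B_s'(0)}{u^{2s+2}}+O\bigl(u^{-2N-2}\bigr)\biggr),
\end{gather*}
and solving for $\beta_2(u)$ and substituting $\mu=b-1$ gives precisely \eqref{U:beta2}. For the boundary case $\Re\mu=0$, $\mu\neq 0$ (i.e., $\Re b=1$, $b\neq 1$) the $z^{2\mu}$ summand no longer vanishes as $z\to 0^+$, but Lemma~\ref{2:l3}(b) is designed exactly to subtract off such an oscillatory term, so the same identification of limits produces the same formula for $\beta_2(u)$ and, as a byproduct, fixes the leading behavior of $\beta_1(u)$.

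The main obstacle is the case $\mu=0$ (i.e., $b=1$), where the two Frobenius solutions coalesce and $U(a,1,x)$ acquires a logarithm. Here I would replace the connection formula by the standard logarithmic expansion of $U(a,1,x)$ near $x=0$, whose singular piece is $-\Gamma(a)^{-1}\ln x$ times $M(a,1,x)$; after multiplying \eqref{U:connect} by $(z\ln z)^{-1}$ instead of $z^{\mu-1}$, Lemma~\ref{2:l3}(c) gives the limit $-2\beta_2(u)/\Gamma(a)$, which matches $-1$ with relative error $O(u^{-2N})$ and confirms $\beta_2(u)=\Gamma(a)/2+O(u^{-2N})$, in agreement with \eqref{U:beta2} at $b=1$ (all $B_s'(0)$ terms being absent in the prefactor $2(1-b)$). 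The only technical care needed is to ensure in each case that the $\beta_1$ contribution, which is of lower order in $z$ than the $\beta_2$ contribution, does not pollute the limit extraction; this is automatic once the two branches are separated as above.
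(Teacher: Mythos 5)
Your argument is correct and is essentially the paper's own proof: in all three cases ($\Re b>1$; $\Re b=1$, $b\ne 1$; $b=1$) you extract $\beta_2(u)$ from the $z\to0^+$ behaviour of $z^{b-2}W_2(u,z)$ (resp.\ $W_2(u,z)/(z\ln z)$) via \eqref{U:connect} and match it against Lemma~\ref{2:l3}(a), (b), (c). The only cosmetic difference is that you invoke the full connection formula expressing $U$ in terms of $M$ before passing to the limit, whereas the paper quotes the limiting forms \cite[(13.2.18), (13.2.19)]{NIST} directly.
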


\begin{proof}
Suppose $\Re b>1$.
Then \cite[(13.2.18)]{NIST}
\begin{gather*}
\lim_{z\to0^+}z^{2b-2}U\big(a,b,z^2\big)=\frac{\Gamma(b-1)}{\Gamma(a)}
\end{gather*}
and \eqref{U:connect} give
\begin{gather*}
\lim_{z\to 0^+} z^{b-2} W_2(u,z)= \beta_2(u) \frac{\Gamma(b-1)}{\Gamma(a)} .
\end{gather*}
Comparing with \eqref{2:limitW2}, we obtain~\eqref{U:beta2}.

If $\Re b=1$, $b\ne 1$, the proof is similar using Lemma~\ref{2:l3}(b) and~\cite[(13.2.18)]{NIST}
\begin{gather*}
U(a,b,x)=\frac{\Gamma(b-1)}{\Gamma(a)} x^{1-b}+\frac{\Gamma(1-b)}{\Gamma(a-b+1)}+O(x)\qquad\text{as} \quad x\to0^+.
\end{gather*}
If $b=1$ we use Lemma \ref{2:l3}(c) and \cite[(13.2.19)]{NIST}
\begin{gather*}
\lim_{x\to0^+} \frac{U(a,1,x)}{\ln x} =-\frac{1}{\Gamma(a)} .\tag*{\qed}
\end{gather*}
\renewcommand{\qed}{}
\end{proof}

We cannot show that $\beta_1(u)=0$ but we can prove that $|\beta_1(u)|$ is very small as $u\to\infty$.
To this end we need the following lemma.

\begin{Lemma}\label{U:l2}
Let $b\in\C$, $\Re x>0$, and $\epsilon>0$. There is a constant $Q$ independent of $a$ such that
\begin{gather*} |\Gamma(a) U(a,b,x)|\le Q \qquad\text{if} \quad \Re a\ge \epsilon. \end{gather*}
\end{Lemma}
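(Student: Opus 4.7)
The plan is to use the standard Mellin--Barnes / Laplace integral representation for $U(a,b,x)$ (NIST DLMF (13.4.4)),
\begin{gather*}
\Gamma(a)\,U(a,b,x)=\int_0^\infty e^{-xt}\,t^{a-1}(1+t)^{b-a-1}\,dt,
\end{gather*}
valid precisely in the range $\Re a>0$, $\Re x>0$. The attractive feature is that the troublesome factor $\Gamma(a)$ has already been absorbed into the integral, so the whole estimate reduces to bounding the right-hand side uniformly in $a$ for $\Re a\ge\epsilon$.

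First I would pass to absolute values and use $|t^{a-1}|=t^{\Re a-1}$, $|(1+t)^{b-a-1}|=(1+t)^{\Re b-\Re a-1}$, $|e^{-xt}|=e^{-\Re(x)t}$, giving
\begin{gather*}
|\Gamma(a)\,U(a,b,x)|\le\int_0^\infty e^{-\Re(x)t}\,t^{\Re a-1}(1+t)^{\Re b-\Re a-1}\,dt.
\end{gather*}
Then I would split the integral at $t=1$. On $(0,1]$, since the function $r\mapsto t^{r-1}$ is nonincreasing in $r$ for fixed $t\in(0,1]$, the hypothesis $\Re a\ge\epsilon$ yields $t^{\Re a-1}\le t^{\epsilon-1}$, while $(1+t)^{\Re b-\Re a-1}$ is bounded by $\max\bigl(1,2^{\Re b-\epsilon-1}\bigr)$ uniformly in $a$. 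Hence the integrand on $(0,1]$ is dominated by a constant (depending only on $b,\epsilon$) times $t^{\epsilon-1}$, which is integrable.

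On $[1,\infty)$ I would rewrite the product as
\begin{gather*}
t^{\Re a-1}(1+t)^{\Re b-\Re a-1}=\left(\frac{t}{1+t}\right)^{\!\Re a-1}(1+t)^{\Re b-2}.
\end{gather*}
For $t\ge 1$ one has $t/(1+t)\in[1/2,1)$, so $\bigl(t/(1+t)\bigr)^{\Re a-1}\le \max(1,2^{1-\epsilon})$ uniformly in $a$ with $\Re a\ge\epsilon$. The remaining factor $e^{-\Re(x)t}(1+t)^{\Re b-2}$ is integrable on $[1,\infty)$ because $\Re x>0$ kills any polynomial growth in $t$. Adding the two pieces gives a constant $Q=Q(b,x,\epsilon)$ independent of $a$, which is what is needed.

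There is essentially no hidden obstacle here: the tension is only between the possibly large $\Re a$ in $t^{\Re a-1}$ (dangerous as $t\to\infty$) and in $(1+t)^{\Re b-\Re a-1}$ (dangerous as $t\to 0$), and the rewriting in terms of $t/(1+t)$ is the standard device that makes the two cancellations explicit and uniform in $a$. The only mild care is in treating the case $\Re a<1$ when estimating $\bigl(t/(1+t)\bigr)^{\Re a-1}$, which is where the lower bound $\Re a\ge\epsilon$ is actually used.
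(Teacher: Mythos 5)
Your proof is correct and follows essentially the same route as the paper: both start from the representation $\Gamma(a)U(a,b,x)=\int_0^\infty e^{-xt}t^{a-1}(1+t)^{b-a-1}\,dt$ and exploit the factor $\bigl(t/(1+t)\bigr)^{\Re a}$ to get uniformity in $a$. The only cosmetic difference is that the paper avoids splitting at $t=1$ by writing the integrand as $e^{-\Re x\,t}\bigl(t/(1+t)\bigr)^{\Re a-\epsilon}\bigl(t/(1+t)\bigr)^{\epsilon-1}(1+t)^{\Re b-2}$ and dropping the first ratio (which is $\le 1$) in a single step.
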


\begin{proof}
We use the integral representation \cite[(13.4.4)]{NIST}
\begin{gather*}
\Gamma(a)U(a,b,x)=\int_0^\infty e^{-xt}t^{a-1}(1+t)^{b-a-1} dt .
\end{gather*}
Therefore, if $\Re a\ge \epsilon$,
\begin{gather*}
 |\Gamma(a)U(a,b,x)| \le \int_0^\infty e^{-\Re x t}\left(\frac{t}{1+t}\right)^{\Re a-\epsilon}\left(\frac{t}{1+t}\right)^{\epsilon-1}(1+t)^{\Re b-2} dt\\
\hphantom{|\Gamma(a)U(a,b,x)|}{} \le \int_0^\infty e^{-\Re x t}\left(\frac{t}{1+t}\right)^{\epsilon-1}(1+t)^{\Re b-2} dt = :Q .\tag*{\qed}
 \end{gather*}
\renewcommand{\qed}{}
\end{proof}

\begin{Lemma}\label{U:l3}
Suppose $\Re b\ge 1$. For every $q<R$ we have $\beta_1(u)=O(e^{-qu})$ as $0<u\to\infty$.
\end{Lemma}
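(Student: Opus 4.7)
The plan is to pin down $\beta_1(u)$ by evaluating the connection formula \eqref{U:connect} at a single point $z=q\in(0,R)$. First, I would use \eqref{M:W3} to rewrite \eqref{U:connect} in the form
\begin{gather*}
W_2(u,z)=\beta_1(u) \frac{\Gamma(b) 2^{b-1}}{u^{b-1}} W_3(u,\mu,z) + \beta_2(u) e^{-\frac12 z^2} z^b U\bigl(a,b,z^2\bigr),
\end{gather*}
and solve algebraically for $\beta_1(u)$.

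Next, I would bound each of the other three quantities at $z=q$. For $W_2(u,q)$: the $K$-expansion \eqref{1:W2}, \eqref{1:W2a} together with \eqref{2:asyK} yields $|W_2(u,q)|=O(u^{-1/2} e^{-uq})$. For the $U$-term: Lemma \ref{U:l1} gives $\beta_2(u)=\Gamma(a) 2^{b-2} u^{1-b}(1+O(u^{-2}))$, and since $\Re a=\tfrac14 u^2+\tfrac12\Re b\ge \tfrac12$ for $u$ large, Lemma \ref{U:l2} keeps $\Gamma(a)U(a,b,q^2)$ bounded; hence the $U$-term is $O(u^{1-\Re b})$, which is polynomially bounded in $u$. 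For $W_3(u,\mu,q)$: because $\Re\mu=\Re b-1\ge0$, Lemma \ref{2:l1} (equivalently Theorem \ref{6:t1}) gives the $I$-expansion \eqref{1:W1}, whose leading term $q I_\mu(uq)$ has, by \eqref{2:asyI}, magnitude $\sqrt{q/(2\pi u)}\, e^{uq}(1+O(1/u))$, while the $I_{\mu+1}$-piece carries an extra factor $1/u$. Thus a lower bound $|W_3(u,\mu,q)|\ge c u^{-1/2} e^{uq}$ holds for some $c=c(\mu,q)>0$ and all sufficiently large $u$.

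Plugging these estimates into the identity obtained by solving for $\beta_1(u)$ gives $|\beta_1(u)|\le C u^{1/2} e^{-uq}$. Since $q\in(0,R)$ is arbitrary, any prescribed $q'<R$ is handled by choosing $q\in(q',R)$ in the argument above: then $u^{1/2} e^{-uq}=O(e^{-q'u})$, which is the conclusion of the lemma.

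The only delicate step is the lower bound on $|W_3(u,\mu,q)|$; one has to verify that the $I_\mu$ leading term truly dominates the $I_{\mu+1}$ correction and the $g_1, h_1$ errors, so that no cancellation spoils the estimate. This is transparent for positive real $u$ and $q$: the coefficient in front of $I_\mu(uq)$ is $q(1+O(u^{-2}))$, bounded away from zero, while every other term carries either an explicit $1/u$ or an $O(u^{-2})$ factor. The remaining parts of the argument are straightforward combinations of Lemmas \ref{2:l1}, \ref{U:l1}, \ref{U:l2} and the Bessel asymptotics \eqref{2:asyI}, \eqref{2:asyK}.
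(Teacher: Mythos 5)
Your proposal is correct and follows essentially the same route as the paper: evaluate the connection formula \eqref{U:connect} at a fixed positive point, upper-bound $W_2$ via \eqref{1:W2}, \eqref{2:asyK} and the $\beta_2 U$-term via Lemmas~\ref{U:l1} and~\ref{U:l2}, lower-bound the $M$-term (equivalently $W_3$) via Theorem~\ref{M:t1} and \eqref{2:asyI}, then solve for $\beta_1(u)$. The only cosmetic difference is that the paper evaluates at $z=R$ and lets the resulting polynomial factor be absorbed by any $q<R$, whereas you evaluate at an intermediate point $q\in(q',R)$; both yield the stated estimate.
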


\begin{proof}
In the following let $0<z\le R$ (and $b$) be f\/ixed.
By Lemmas~\ref{U:l1},~\ref{U:l2}, there is a constant $C_1>0$ such that, for suf\/f\/iciently large $u>0$,
\begin{gather}\label{U:est1}
 \big|\beta_2(u) e^{-\frac12z^2}z^bU\big(a,b,z^2\big)\big| \le C_1\big|u^{1-b}\big| Q .
\end{gather}
Using \eqref{2:asyI}
we get from Theorem \ref{M:t1} with $N=1$, for some constant $C_2>0$,
\begin{gather}\label{U:est2}
 \big|e^{-\frac12z^2} z^bM\big(a,b,z^2\big)\big|\ge C_2 \big|u^{\frac12-b}\big|e^{zu} .
\end{gather}
Similarly, \eqref{2:asyK},
\eqref{1:W2}, \eqref{1:W2a} yield
\begin{gather}\label{U:est3}
 |W_2(u,z)|\le C_3 u^{-\frac12}e^{-zu} .
\end{gather}
Substituting \eqref{U:est1}, \eqref{U:est2} and \eqref{U:est3} in~\eqref{U:connect}.
we f\/ind
\begin{gather*}
|\beta_1(u)|\le \frac{C_3}{C_2} \big|u^{b-1}\big|e^{-2zu}+\frac{C_1Q}{C_2}u^{\frac12}e^{-zu}.
\end{gather*}
If we choose $z=R$, we obtain the desired estimate.
\end{proof}

\begin{Lemma}\label{U:l4}
Suppose $\Re b\ge 1$. For every $N=1,2,3,\dots$, the function
\begin{gather*}
 \beta_2(u)e^{-\frac12z^2}z^b U\big(a,b,z^2\big)
\end{gather*}
can be written in the form of the right-hand side of \eqref{1:W2}, and \eqref{1:W2a} holds with $R$ replaced by~$\frac13 R$.
\end{Lemma}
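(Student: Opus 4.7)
The plan is to invert \eqref{U:connect} to isolate the $U$-term. Using \eqref{M:W3} to rewrite the $M$-term, one obtains
\[
\beta_2(u)\, e^{-\frac12 z^2} z^b U\bigl(a,b,z^2\bigr) = W_2(u,z) \;-\; \beta_1(u)\, \frac{\Gamma(b)}{2^{1-b}u^{b-1}}\, W_3(u,\mu,z).
\]
By Theorem~\ref{3:t1}, the first summand $W_2(u,z)$ is already in the required form \eqref{1:W2} with $|g_2|+|h_2| \le K_2/u^{2N}$ uniformly on $0<|z|\le R$ and without any restriction on $\arg z$. The task therefore reduces to showing that the correction term on the right can also be written in the form \eqref{1:W2}, with an error absorbable into a fresh $O(u^{-2N})$ contribution.

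Since $\Re\mu \ge 0$, Theorem~\ref{6:t1} provides the expansion \eqref{1:W1} of $W_3(u,\mu,z)$ on the full Riemann surface of the logarithm, with bounded remainders $g_1,h_1$. To convert the $I_\mu$/$I_{\mu+1}$ form into the $K_\mu$/$K_{\mu+1}$ form required by \eqref{1:W2}, I would multiply \eqref{3:relation} by $uz$, giving $1 = uzK_\mu(uz) I_{\mu+1}(uz) + uzK_{\mu+1}(uz) I_\mu(uz)$, and insert this factor in front of each $I$-term. For example,
\[
zI_\mu(uz) = zK_\mu(uz)\cdot uzI_\mu(uz) I_{\mu+1}(uz) + \frac{z}{u}K_{\mu+1}(uz)\cdot u^2 zI_\mu(uz)^2,
\]
and analogously for $(z/u)I_{\mu+1}(uz)$. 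The resulting coefficients are polynomial-in-$u$ multiples of products $I_\alpha(uz) I_\beta(uz)$, each bounded by a constant times $e^{2|uz|}$.

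The decisive estimate now comes from Lemma~\ref{U:l3}: $|\beta_1(u)| \le C_q\, e^{-qu}$ for every $q<R$. Restricting to $|z|\le R/3$ gives $|uz|\le uR/3$, so every coefficient above is bounded by $Cu^2 e^{2uR/3}$. Choosing any $q\in(2R/3,R)$, for instance $q=5R/6$, the product of $|\beta_1(u)|$ with such a coefficient is at most $Cu^2 e^{-(q-2R/3)u}$, which is exponentially small and hence dominated by $1/u^{2N}$ for every $N$. Combining this exponentially small correction with the bona-fide expansion of $W_2$ produces the desired expansion and error bound on $0<|z|\le R/3$.

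The main obstacle is the bookkeeping of the Bessel-function products together with their uniform bounds on the full Riemann surface: for $|\arg z|$ large one must first reduce to the principal branch via \eqref{3:ancontI} and \eqref{3:ancontK}, and the ensuing sheet factors are absorbed into the constants exactly as in the proofs of Theorems~\ref{3:t1} and~\ref{6:t1}. The cushion between the radii $R$ and $R/3$ (instead of the asymptotically tight $R/2$) is there only to ensure that some $q<R$ strictly exceeds $2|z|$ uniformly on the allowed range.
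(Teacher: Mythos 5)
Your proposal is correct and follows essentially the same route as the paper: isolate the correction term $\beta_1(u)e^{-\frac12z^2}z^bM(a,b,z^2)$ via \eqref{U:connect}, bound it by $|z|e^{-qu}e^{|uz|}$ using Lemma~\ref{U:l3} and $|I_\nu(x)|\le Ce^{|x|}$, convert it to the $K_\mu$/$K_{\mu+1}$ form with the Wronskian identity \eqref{3:relation}, and shrink the radius to $\frac13R$ with $q=\frac56R$ so the resulting contributions to $g_2$, $h_2$ are exponentially small. The only (harmless) deviation is that you invoke Theorem~\ref{3:t1} and full-Riemann-surface validity here, whereas the lemma only asserts the expansion in the sector $|\arg z|\le\frac32\pi-\delta$ of \eqref{1:W2a}; the removal of the $\arg z$ restriction is deferred to the proof of Theorem~\ref{U:t1}.
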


\begin{proof}
Let
\begin{gather*} L(u,z):=\beta_1(u)e^{-\frac12z^2}z^bM\big(a,b,z^2\big) .\end{gather*}
Applying Theorem~\ref{M:t1} and Lemma~\ref{U:l3}, we estimate, for $0<|z|\le R$,
\begin{gather}\label{U:est4}
 |L(u,z)|\le C_1 e^{-qu} |z|\big(|I_\mu(uz)|+u^{-1}|I_{\mu+1}(uz)|\big) ,
 \end{gather}
where $q<R$ will be chosen later.
We use the estimate
\begin{gather}\label{U:est5}
 |I_\nu(x)|\le C_2 e^{|x|} \qquad \text{for} \quad |\arg x|\le \frac32\pi
\end{gather}
provided that $\Re \nu\ge 0$. This inequality follows from \cite[(9.2), (9.3)]{O}.
Therefore, \eqref{U:est4} yields
\begin{gather}\label{U:est6}
|L(u,z)|\le C_3 |z|e^{-qu+\frac13Ru}\qquad\text{for}\quad 0<|z|\le \frac13R, \quad |\arg z|\le \frac32\pi, \quad u\ge u_0.
\end{gather}
Using \eqref{3:relation}, we have
\begin{gather*} L(u,z)=zK_\mu(uz)g(u,z)-\frac{z}{u}K_{\mu+1}(uz) zh(u,z), \end{gather*}
where
\begin{gather*}
g(u,z) = uI_{\mu+1}(uz) L(u,z),\qquad
h(u,z) = -\frac{u^2}{z} I_\mu(uz)L(u,z) .
\end{gather*}
From \eqref{U:est5}, \eqref{U:est6}, we get, for $0<|z|\le \frac13R$, $|\arg z|\le \frac32\pi$,
\begin{gather*}
|g(u,z)| \le C_2C_3R u e^{u(\frac23R-q)},\qquad
|h(u,z)| \le C_2C_3 u^2 e^{u(\frac23R-q)} .
\end{gather*}
By \eqref{U:connect}, we can write $\beta_2(u)e^{-\frac12z^2}z^bU(a,b,z^2)$ as the right-hand side of \eqref{1:W2} with $g_2$ replaced by $g_2-g$
and~$h_2$ replaced
by~$h_2-h$. If we choose $q=\frac56R$, $g$ and $h$ become exponentially small as $u\to\infty$, and the theorem is proved.
\end{proof}

\begin{Lemma}\label{U:l5}
Suppose $\Re b\ge 1$.
For all $N=1,2,3,\dots$, we have, as $0<u\to\infty$,
\begin{gather}\label{U:newbeta2}
 \frac{\beta_2(u)2^bu^{1-b}}{\Gamma(1+a-b)}=1+O\left(\frac{1}{u^{2N}}\right).
\end{gather}
Moreover, for all $b\in\C$ and all $N=1,2,3,\dots$, we have, as $0<u\to\infty$,
\begin{gather}\label{U:gammaquotient}
 \frac{\Gamma(1+a-b)}{\Gamma(a)} 2^{2-2b}u^{2b-2}=1+2(1-b)\sum_{s=0}^{N-1} \frac{B_s'(0)}{u^{2s+2}}+O\left(\frac{1}{u^{2N+2}}\right) .
\end{gather}
\end{Lemma}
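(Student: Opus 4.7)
My plan is to establish \eqref{U:newbeta2} via Kummer's $U$--$M$ connection formula together with Theorem~\ref{2:t1} and Lemma~\ref{U:l3}, and then to obtain \eqref{U:gammaquotient} first for $\Re b\ge 1$ by dividing \eqref{U:newbeta2} into \eqref{U:beta2}, finally extending \eqref{U:gammaquotient} to arbitrary $b\in\C$ by an analyticity argument in the parameter $b$.

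For \eqref{U:newbeta2}, I assume temporarily that $b$ is not a positive integer so that Theorem~\ref{2:t1} applies with $\mu=b-1$:
\begin{gather*}
W_2(u,z)=\gamma(u)W_3(u,b-1,z)+\delta(u)W_3(u,1-b,z).
\end{gather*}
The standard identity
\begin{gather*}
U(a,b,x)=\frac{\Gamma(1-b)}{\Gamma(1+a-b)}M(a,b,x)+\frac{\Gamma(b-1)}{\Gamma(a)}x^{1-b}M(1+a-b,2-b,x),
\end{gather*}
combined with \eqref{M:W3} applied twice (once for parameter $\mu=b-1$, once for $\mu=1-b$, noting that $1+a-b=\tfrac14 u^2+\tfrac12(2-b)$), expresses $e^{-\frac12 z^2}z^b U(a,b,z^2)$ as an explicit linear combination of $W_3(u,b-1,z)$ and $W_3(u,1-b,z)$. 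Substituting into \eqref{U:connect} and identifying the coefficient of $W_3(u,b-1,z)$ between the two descriptions of $W_2(u,z)$ yields
\begin{gather*}
\gamma(u)=\frac{\Gamma(b)\,2^{b-1}}{u^{b-1}}\left(\beta_1(u)+\beta_2(u)\frac{\Gamma(1-b)}{\Gamma(1+a-b)}\right).
\end{gather*}
I then invoke \eqref{2:gamma} (using $\sin(\pi(b-1))=-\sin(\pi b)$ so $\gamma(u)=\frac{\pi}{2\sin(\pi b)}(1+O(u^{-2N}))$), the reflection identity $\Gamma(b)\Gamma(1-b)=\pi/\sin(\pi b)$ to cancel the trigonometric factor, and Lemma~\ref{U:l3} to absorb the exponentially small $\beta_1(u)$ contribution into the $O(u^{-2N})$ error. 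Solving for $\beta_2(u)$ produces \eqref{U:newbeta2} for non-integer $b$; for $b$ a positive integer the estimate extends by continuity, since both $\beta_2(u)$ and $\Gamma(1+a-b)\,2^{-b}u^{b-1}$ are analytic in $b$.

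Once \eqref{U:newbeta2} is in hand, dividing the expansion of $\beta_2(u)$ given in \eqref{U:beta2} (applied with $N+1$ in place of $N$) by the one in \eqref{U:newbeta2} and rearranging yields \eqref{U:gammaquotient} for $\Re b\ge 1$ and $b\notin\mathbb{Z}_{>0}$. To extend to all $b\in\C$, I appeal to the standard Stirling expansion of $\Gamma(z+\alpha)/\Gamma(z+\beta)$, which shows that the left-hand side of \eqref{U:gammaquotient} admits an asymptotic series in $1/u^2$ whose coefficients are polynomials in $b$, while the quantities $2(1-b)B_s'(0)$ are likewise polynomials in $b$ via the recursion \eqref{1:eq3}, \eqref{1:eq4} with $\mu=b-1$ and $f(z)=z^2$. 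Since these two polynomial families have been shown to agree on the uncountable set $\{b:\Re b\ge 1,\ b\notin\mathbb{Z}_{>0}\}$, they agree identically, and so \eqref{U:gammaquotient} holds for every $b\in\C$.

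The main obstacle is the careful bookkeeping in the coefficient-matching step for \eqref{U:newbeta2}: one must keep track of the interplay between the reflection formula and the two different normalization factors in \eqref{M:W3}, and verify that the exponentially small $\beta_1(u)$ term truly contributes only to the $O(u^{-2N})$ remainder despite the factorial growth of $\Gamma(1+a-b)$. Everything else is routine algebra once Theorem~\ref{2:t1}, Lemma~\ref{U:l1}, and Lemma~\ref{U:l3} are in place.
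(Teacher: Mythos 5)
Your route is genuinely different from the paper's, and most of it works. For \eqref{U:newbeta2} the paper applies the monodromy relation $U(a,b,xe^{2\pi i})=e^{-2\pi ib}U(a,b,x)+\frac{2\pi i e^{-\pi ib}}{\Gamma(b)\Gamma(1+a-b)}M(a,b,x)$ to $T(u,z)=\beta_2(u)e^{-z^2/2}z^bU(a,b,z^2)$ and then repeats the argument of Lemma~\ref{2:l2}, invoking Lemma~\ref{U:l4} twice; this works uniformly for all $\Re b\ge 1$. You instead go through Theorem~\ref{2:t1} and the connection formula $U=\frac{\Gamma(1-b)}{\Gamma(1+a-b)}M(a,b,\cdot)+\frac{\Gamma(b-1)}{\Gamma(a)}x^{1-b}M(1+a-b,2-b,\cdot)$; the coefficient matching, the reflection-formula cancellation, and the absorption of the exponentially small $\beta_1(u)$ term (which never meets the large factor $\Gamma(1+a-b)$) are all correct for non-integer $b$, and matching the coefficient of $W_3(u,1-b,z)$ would even re-derive \eqref{U:beta2}. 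Your extension of \eqref{U:gammaquotient} to all $b\in\C$ by observing that the coefficients on both sides are polynomials in $b$ agreeing on a set with a limit point is also valid, and is arguably cleaner than the paper's appeal to \eqref{5:reciprocal}; it is essentially the computation the paper carries out later in Section~\ref{Temme}.

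The genuine gap is the case of positive integer $b$ in \eqref{U:newbeta2}. There $\mu=b-1$ is a nonnegative integer, Theorem~\ref{2:t1} is unavailable ($W_3(u,\mu,z)$ and $W_3(u,-\mu,z)$ need not be independent, and $\Gamma(1-b)$ and $1/\sin(\pi b)$ degenerate), and the proposed fix ``extends by continuity in $b$'' does not go through as stated: \eqref{U:newbeta2} is an asymptotic estimate whose implied constant depends on $b$ (through the constants in Olver's Theorem~D, in Lemma~\ref{2:l2}, and in Lemma~\ref{U:l3}), and nothing controls these constants uniformly as $b$ approaches an integer; it is not even established that Olver's solution $W_2$, hence $\beta_2(u)$, depends continuously on $\mu$ for fixed $u$. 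A pointwise limit in $b$ of quantities each of the form $1+O_b(u^{-2N})$ need not be $1+O(u^{-2N})$. The gap is fixable: either use the paper's monodromy argument directly (it is insensitive to integer $b$), or note that once \eqref{U:gammaquotient} is known for all $b$ and \eqref{U:beta2} for all $\Re b\ge 1$, multiplying the two yields \eqref{U:newbeta2} at the integers. As written, however, the integer case is not proved.
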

\begin{proof}
We set
\begin{gather*} T(u,z):=\beta_2(u)e^{-\frac12z^2}z^bU\big(a,b,z^2\big).\end{gather*}
Using \cite[(13.2.12)]{NIST}
\begin{gather*}
U\big(a,b,xe^{2i\pi}\big)=e^{-2\pi i b}U(a,b,x)+\frac{2\pi i e^{-\pi i b}}{\Gamma(b)\Gamma(1+a-b)} M(a,b,x)
\end{gather*}
and \eqref{M:W3} we obtain
\begin{gather*}
T(u,ze^{i\pi})-e^{-\pi i b}T(u,z)=\beta_2(u)\frac{\pi i 2^bu^{1-b}}{\Gamma(1+a-b)}W_3(u,z) .
\end{gather*}
Now we argue as in the proof of Lemma~\ref{2:l2} (applying Lemma~\ref{U:l4} twice) and arrive at~\eqref{U:newbeta2}.
If $\Re b\ge 1$ the asymptotic formula~\eqref{U:gammaquotient} follows from~\eqref{U:newbeta2} and Lemma~\ref{U:l1}. If $\Re b<1$ we use~\eqref{5:reciprocal}.
\end{proof}

\begin{Theorem}\label{U:t1}
Suppose that $b\in\C$, $N\ge 1$ and $R>0$. Then we can write
\begin{gather}
 \Gamma\big(1+\tfrac14u^2-\tfrac12 b\big)2^{-b}u^{b-1}e^{-\frac12z^2}z^b U\big(\tfrac14u^2+\tfrac12 b,b,z^2\big)\nonumber \\
\qquad {} = zK_{b-1}(uz)\left(\sum_{s=0}^{N-1} \frac{A_s(z)}{u^{2s}}+g_2(u,z)\right)
 -\frac{z}{u}K_b(uz)\left(\sum_{s=0}^{N-1} \frac{B_s(z)}{u^{2s}}+zh_2(u,z)\right),\label{U:asy2}
\end{gather}
where
\begin{gather}\label{U:asy2b}
|g_2(u,z)|+|h_2(u,z)|\le \frac{K_2}{u^{2N}}\qquad\text{for} \quad 0<|z|\le R, \quad u\ge u_2,
\end{gather}
and $K_2$, $u_2$ are constants independent of~$z$ and~$u$. There is no restriction on $\arg z$. The polyno\-mials~$A_s(z)$,~$B_s(z)$ appearing in~\eqref{U:asy2}
are determined by the recursion~\eqref{1:eq3},~\eqref{1:eq4} with $f(z)=z^2$ and the conditions $A_0(z)=1$, $A_s(0)=0$ for $s\ge 1$.

Alternatively, we have
\begin{gather}
 \Gamma\big(\tfrac14u^2+\tfrac12 b\big)2^{b-2}u^{1-b}e^{-\frac12z^2}z^b U\big(\tfrac14u^2+\tfrac12 b,b,z^2\big)\nonumber \\
\qquad{} = zK_{b-1}(uz)\left(\sum_{s=0}^{N-1} \frac{a_s(z)}{u^{2s}}+g_2(u,z)\right)
-\frac{z}{u}K_b(uz)\left(\sum_{s=0}^{N-1} \frac{b_s(z)}{u^{2s}}+zh_2(u,z)\right),\label{U:asy3}
\end{gather}
where again \eqref{U:asy2b} holds. The polynomials $a_s(z)$, $b_s(z)$ are defined by~\eqref{5:a},~\eqref{5:b}.
\end{Theorem}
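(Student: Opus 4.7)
The plan is to establish~\eqref{U:asy2} first under $\Re b\ge 1$, derive~\eqref{U:asy3} in the same range by a formal-series manipulation, extend~\eqref{U:asy3} to every $b\in\C$ via Kummer's transformation, and finally invert the manipulation to recover~\eqref{U:asy2} for $\Re b<1$. Since $f(z)=z^2$ is entire, Olver's construction may be initiated with any radius, so occasional shrinkage of~$R$ by constant factors is harmless.

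For $\Re b\ge 1$, Lemma~\ref{U:l4} already writes $\beta_2(u)e^{-z^2/2}z^b U(a,b,z^2)$ as the right-hand side of~\eqref{1:W2} with $\mu=b-1$, on the Olver range $0<|z|\le R/3$, $|\arg z|\le \tfrac32\pi-\delta$. This function is a solution of~\eqref{M:eq2} that qualifies as an Olver~$W_2$ (cf.\ Remark~4 of Section~\ref{Olver}), so Theorem~\ref{3:t1} removes the restriction on~$\arg z$. By~\eqref{U:newbeta2}, the factor $\Gamma(1+a-b)2^{-b}u^{b-1}/\beta_2(u)$ equals $1+O(u^{-2N})$, and replacing $\beta_2(u)$ by the target prefactor only perturbs $g_2,h_2$ by $O(u^{-2N})$; this gives~\eqref{U:asy2}. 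For~\eqref{U:asy3} in the same range, I multiply~\eqref{U:asy2} by
\[
\frac{\Gamma(a)2^{b-2}u^{1-b}}{\Gamma(1+a-b)2^{-b}u^{b-1}}=\frac{\Gamma(a)}{\Gamma(1+a-b)}2^{2b-2}u^{2-2b},
\]
which by~\eqref{U:gammaquotient} and the reciprocal identity~\eqref{5:reciprocal} equals $F(u,-\mu)+O(u^{-2N-2})$. The formal identities~\eqref{5:formal1},~\eqref{5:formal2} then convert $\sum_sA_s/u^{2s}$ and $\sum_sB_s/u^{2s}$ into $\sum_sa_s/u^{2s}$ and $\sum_sb_s/u^{2s}$, with all truncation and multiplier corrections absorbed into new $g_2,h_2$.

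For arbitrary $b\in\C$, I invoke Kummer's transformation $U(a,b,x)=x^{1-b}U(1+a-b,2-b,x)$. Setting $\tilde b=2-b$, $\tilde a=1+a-b$ gives $\tilde\mu=\tilde b-1=-\mu$, $\tilde a=\tfrac14u^2+\tfrac12\tilde b$, $\Gamma(1+\tilde a-\tilde b)=\Gamma(a)$, and $z^{\tilde b}U(\tilde a,\tilde b,z^2)=z^bU(a,b,z^2)$. For $\Re b\le 1$ we have $\Re\tilde b\ge 1$, so~\eqref{U:asy2} applies to $(\tilde a,\tilde b)$; its LHS becomes the LHS of~\eqref{U:asy3}. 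On the right, $K_{\tilde b-1}=K_{b-1}$ and $K_{\tilde b}=K_{b-2}=K_b-\tfrac{2(b-1)}{uz}K_{b-1}$ by recursion~\eqref{6:eq7}. Substituting and regrouping, the coefficient of $zK_{b-1}(uz)$ rearranges as
\[
\sum_{s=0}^{N-1}\frac{A_s(-\mu,z)+\tfrac{2\mu}{z}B_{s-1}(-\mu,z)}{u^{2s}}+O(u^{-2N})=\sum_{s=0}^{N-1}\frac{a_s(z)}{u^{2s}}+O(u^{-2N})
\]
by~\eqref{5:a} (with $B_{-1}\equiv 0$), the tail $\tfrac{2\mu}{u^{2N}z}B_{N-1}(-\mu,z)$ being $O(u^{-2N})$ since $B_{N-1}$ is odd. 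The coefficient of $-\tfrac{z}{u}K_b(uz)$ is $\sum_sB_s(-\mu,z)/u^{2s}=\sum_sb_s(z)/u^{2s}$ by~\eqref{5:b}. This yields~\eqref{U:asy3} for $\Re b\le 1$, and combined with the previous paragraph covers all $b\in\C$. Finally,~\eqref{U:asy2} for $\Re b<1$ is recovered by inverting the earlier manipulation: multiply~\eqref{U:asy3} by $F(u,\mu)$ (the reciprocal of $F(u,-\mu)$ by~\eqref{5:reciprocal}) and apply~\eqref{5:formal3},~\eqref{5:formal4}.

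The main obstacle is the Kummer step: one must check that the Bessel recursion~\eqref{6:eq7} and the definitions~\eqref{5:a},~\eqref{5:b} mesh precisely so that the coefficient of $zK_{b-1}(uz)$ rearranges into $\sum a_s/u^{2s}$ and the tail is controlled by the oddness of~$B_{N-1}$. Once these algebraic matches are in place, the error bookkeeping mirrors that of Theorem~\ref{6:t1} and is routine.
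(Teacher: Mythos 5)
Your proposal is correct and follows essentially the same route as the paper: establish \eqref{U:asy2} for $\Re b\ge 1$ from Lemmas~\ref{U:l4} and~\ref{U:l5} together with Theorem~\ref{3:t1} (noting $V$ qualifies as a $W_2$ and is independent of $R$), pass to the case $\Re b<1$ via Kummer's transformation $U(a,b,x)=x^{1-b}U(1+a-b,2-b,x)$ combined with $K_\nu=K_{-\nu}$, the recursion \eqref{6:eq7} and the definitions \eqref{5:a}, \eqref{5:b}, and relate \eqref{U:asy2} and \eqref{U:asy3} through $F(u,\pm\mu)$ using \eqref{U:gammaquotient}, \eqref{5:reciprocal} and \eqref{5:formal1}--\eqref{5:formal4}. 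Your write-up merely makes explicit the reindexing and tail estimate that the paper compresses into one sentence; no substantive difference.
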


\begin{proof}
We denote
\begin{gather*} V(u,\mu,z):=\Gamma(1+a-b)2^{-b}u^{b-1}e^{-\frac12z^2}z^b U\big(a,b,z^2\big).\end{gather*}
Then we have
\begin{gather*}
 V(u,-\mu,z)=\Gamma(a)2^{b-2}u^{1-b}e^{-\tfrac12 z^2} z^b U\big(a,b,z^2\big)
\end{gather*}
which follows from
\cite[(13.2.40)]{NIST}
\begin{gather*}
U(a,b,x)=x^{1-b}U(1+a-b,2-b,x).
\end{gather*}
For any $b\in\C$, \eqref{5:formal1}, \eqref{5:formal2}, \eqref{5:formal3}, \eqref{5:formal4}, \eqref{U:gammaquotient}
show that the expansions~\eqref{U:asy2} and~\eqref{U:asy3} are equivalent.
We will prove~\eqref{U:asy2} and~\eqref{U:asy3} for $\Re \mu\ge 0$ and $\Re \mu<0$, respectively.

Suppose $\Re \mu\ge 0$. Then~\eqref{U:asy2},~\eqref{U:asy2b}
follow from Lemmas~\ref{U:l4} and~\ref{U:l5} when $|\arg z|\le \frac32 \pi-\delta$.
Since the function~$V(u,\mu,z)$ is independent of $R$ we can replace $\frac13 R$ by~$R$.
By Theorem~\ref{3:t1}, we can remove the restriction on $\arg z$.
Note that in the proof of Theorem~\ref{3:t1} we only used that~$W_2(u,z)$ solves~\eqref{1:eq1} and admits the asymptotic expansions~\eqref{1:W2},~\eqref{1:W2a}. Therefore, we can apply
the theorem to the function $V(u,\mu,z)$ in place of $W_2(u,z)$.

Now suppose that $\Re \mu<0$.
Then, using the expansion we just proved,
\begin{gather*}
 V(u,-\mu,z) = zK_{-\mu}(uz)\left(\sum_{s=0}^{N-1} \frac{A_s(-\mu,z)}{u^{2s}}+g_2(u,z)\right)\nonumber \\
\hphantom{V(u,-\mu,z) =}{} -\frac{z}{u}K_{-\mu+1}(uz)\left(\sum_{s=0}^{N-1} \frac{B_s(-\mu,z)}{u^{2s}}+zh_2(u,z)\right).
\end{gather*}
Using \eqref{5:a}, \eqref{5:b}, \eqref{6:eq7} and $K_\nu(x)=K_{-\nu}(x)$, we obtain \eqref{U:asy3}, \eqref{U:asy2b}.
\end{proof}

So far we considered only asymptotic expansions of $U(a,b,z^2)$ as $0<u\to\infty$.
Now we set $u=te^{i\theta}$, where $t>0$ and $-\frac12\pi<\theta<\frac12\pi$.
Using the notation of Section~\ref{extension}, we have
\begin{gather*}
 e^{-i\theta}W_2(t,x)=\beta_1(u) e^{-\frac12z^2}z^b M\big(a,b,z^2\big)+\beta_2(u) e^{-\frac12z^2}z^b U\big(a,b,z^2\big) .
\end{gather*}
It is easy to see that Lemma~\ref{U:l1} remains valid.
Since we allow $-\frac12\pi<\theta<\frac12\pi$, $a=\frac14u^2+\frac12b$ may have negative real part. We need a modif\/ication of Lemma~\ref{U:l2}.

\begin{Lemma}\label{U:l6}
Let $b\in\C$, $-\pi<\arg x<0$, $|\arg (a-1)|\le \pi-\delta$ for some $\delta>0$. Then there is a~constant~$Q$ independent of~$a$ such that
\begin{gather*} |\Gamma(a)U(a,b,x)|\le Q .\end{gather*}
\end{Lemma}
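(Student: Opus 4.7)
My plan is to adapt the proof of Lemma~\ref{U:l2}, rotating the contour in the integral representation \cite[(13.4.4)]{NIST}
\[
\Gamma(a)U(a,b,x)=\int_0^\infty e^{-xt}t^{a-1}(1+t)^{b-a-1}\,dt\qquad(\Re a>0,\ \Re x>0),
\]
to accommodate $-\pi<\arg x<0$. Given this range of $\arg x$, one can pick $\phi\in(0,\pi)$ with $\Re(xe^{i\phi})\ge\eta>0$. Assuming first $\Re a>0$, Cauchy's theorem allows one to deform the positive real axis to the ray $\{se^{i\phi}\colon s\ge 0\}$, since the integrand is holomorphic in the sector $0<\arg t<\phi$, the singularity $t=-1$ lies outside this sector (as $\phi<\pi$), the integrand is locally integrable at $t=0$, and decays uniformly on circular arcs of large radius thanks to $\Re(xe^{i\phi})>0$. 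The substitution $t=e^{i\phi}s$ yields
\[
\Gamma(a)U(a,b,x)=e^{i\phi a}\int_0^\infty e^{-xe^{i\phi}s}s^{a-1}(1+e^{i\phi}s)^{b-a-1}\,ds.
\]

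Next I would bound the modulus uniformly in $a$. Since $\arg(1+e^{i\phi}s)\in[0,\phi]$ for $s\ge 0$, the combination
\[
\bigl|e^{i\phi a}(1+e^{i\phi}s)^{-a}\bigr|=\exp\bigl(\Im a\,[\arg(1+e^{i\phi}s)-\phi]\bigr)\cdot|1+e^{i\phi}s|^{-\Re a}
\]
has phase exponential $\le 1$ when $\Im a\ge 0$. The remaining real integral has the same shape as in the proof of Lemma~\ref{U:l2} and is bounded by a constant depending only on $x$, $b$ and $\phi$ through the same trick $(t/(1+t))^{\Re a-\epsilon}\cdot(t/(1+t))^{\epsilon-1}$. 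For $\Im a<0$ with $\arg x>-\pi/2$ I would use the mirror rotation $\phi<0$; for $\Im a<0$ with $\arg x\le-\pi/2$ no such $\phi$ is available, but the hypothesis $|\arg(a-1)|\le\pi-\delta$ then pins $a$ to a half-plane in which $|\Im a|$ is comparable to $\Re a\ge 0$, and the analysis reduces to the first case.

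To reach values $\Re a\le 0$ permitted by $|\arg(a-1)|\le\pi-\delta$ but excluded from the classical representation, I would analytically continue by finitely many integrations by parts with respect to $s$, which replace $s^{a-1}$ by $s^{a-1+n}$ at the expense of prefactors of the form $\prod_{k=0}^{n-1}(a+k)^{-1}$. The sector hypothesis keeps each $|a+k|$ bounded below by a positive multiple of $\max(1,|a|)$, so these prefactors remain uniformly bounded. The main obstacle in the plan is the case split on the sign of $\Im a$ relative to the sign of $\arg x$: the rotation that makes $e^{-xt}$ decay may be incompatible with the rotation that keeps $|e^{i\phi a}(1+e^{i\phi}s)^{-a}|$ under control, and it is precisely to exclude the resulting pathological region that the hypothesis $|\arg(a-1)|\le\pi-\delta$ (rather than mere $\Re a\ge \epsilon$) is needed.
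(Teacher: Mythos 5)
Your rotated‑ray argument closes cleanly in exactly the two cases you can actually handle: $\Im a\ge 0$ with $\phi\in(0,\tfrac12\pi)$ chosen so that $\Re(xe^{i\phi})>0$ (such a $\phi$ exists for every $\arg x\in(-\pi,0)$, and then $|w|<1$ and $\arg w\in[0,\phi]$ for $w=t/(1+t)$, so the $\epsilon$‑trick of Lemma~\ref{U:l2} goes through), and $\Im a<0$ with $-\tfrac12\pi<\arg x<0$ via the mirror rotation. The genuine gap is the third case, and your proposed escape from it rests on a false assertion: the hypothesis $|\arg(a-1)|\le\pi-\delta$ does \emph{not} confine $a$ to a region where $|\Im a|$ is comparable to $\Re a$. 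For example $a=1-iT$ has $\arg(a-1)=-\tfrac12\pi$ for every $T>0$, hence lies in the sector for any $\delta\le\tfrac12\pi$, while $\Re a=1$ stays bounded and $\Im a\to-\infty$. For such $a$ and $\arg x\le-\tfrac12\pi$ every admissible ray has $\phi>0$ bounded away from $0$, so $\arg w$ is positive on a set of positive measure and the phase factor $\exp(-\Im a\,\arg w)=\exp(T\arg w)$ is unbounded in $T$; no choice of ray controls it, so the case does not ``reduce to the first case.'' Two smaller points: for $\Re x\le 0$ the real‑axis integral \cite[(13.4.4)]{NIST} diverges, so there is nothing to ``deform'' --- you must define the rotated integral first and identify it with $\Gamma(a)U(a,b,x)$ by analytic continuation in $x$; and the integration‑by‑parts continuation to $\Re a\le 0$ reproduces integrals of the same type with shifted parameters, so it inherits rather than resolves the phase problem.

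The paper takes a different and more robust route: it starts from the loop‑contour representation \cite[(13.4.14)]{NIST},
\begin{gather*}
\big(e^{2\pi i(a-1)}-1\big)\Gamma(a)U(a,b,x)=\int_C e^{-xt}t^{a-1}(1+t)^{b-a-1}\,dt,
\end{gather*}
where $C$ runs from $+\infty i$ around the origin and back; the factor $e^{-xt}$ decays along the positive imaginary axis precisely because $-\pi<\arg x<0$, and the representation is valid for all $a$, so no continuation in $a$ is needed. The uniform bound then comes from choosing the loop so that $w=t/(1+t)$ traces the circle $|w|=\cos\theta_0$ with $\cos\theta_0=e^{\theta_0\tan\alpha_0}$, which forces $|w^{a-1}|\le 1$ on all of $C$ for $\arg(a-1)$ in the sector $[\tfrac12\pi,\alpha_0]$ being treated. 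If you want to keep your approach, you need a substitute for this step in the region $\Im a<0$, $\arg x\le-\tfrac12\pi$ --- a straight ray will not do it.
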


\begin{proof}
We use the integral representation \cite[(13.4.14)]{NIST}
\begin{gather*} \big(e^{2\pi i(a-1)}-1\big)\Gamma(a)U(a,b,x)=\int_C e^{-xt}t^{a-1}(1+t)^{b-a-1} dt ,
\end{gather*}
where the contour $C$ starts at $+\infty i$ and follows the positive
imaginary axis, then describes a loop around $0$ in positive direction and returns to~$+\infty i$.
The argument of $t$ starts at~$\frac12\pi$ and increases to~$\frac52\pi$.
It will be suf\/f\/icient to estimate $\Gamma(a)U(a,b,x)$ in the sector $\frac12\pi\le \arg(a-1)\le \alpha_0$, where $\frac12\pi<\alpha_0<\pi$.
The loop is chosen so that $w=\frac{t}{1+t}$ describes the circle $|w|=\cos\theta_0$, where $\theta_0\in(0,\frac12\pi)$ is the unique solution of
the equation
\begin{gather*} \cos\theta_0=e^{\theta_0\tan\alpha_0} .
\end{gather*}
Then one obtains $|w^{a-1}|\le 1$ on the contour $C$ which implies the desired estimate.
\end{proof}

The proofs of Lemma~\ref{U:l5} and Theorem~\ref{U:t1} can be easily modif\/ied to give the desired asymptotic expansions for $u=te^{i\theta}$ as $0<t\to\infty$
for f\/ixed $\theta\in(-\frac12\pi,\frac12\pi)$.
In~\eqref{U:asy2b} we now have $u=te^{i\theta}$, $t\ge t_2$ and $0<|z|\le R$.

\section{Comparison with Temme \cite{T}}\label{Temme}

It is known \cite[(5.11.13)]{NIST} that, as $z\to\infty$, $|\arg z|\le \pi-\delta$,
\begin{gather}\label{T:eq1}
\frac{\Gamma(z+r)}{\Gamma(z+s)}\sim z^{r-s}\sum_{n=0}^\infty \binom{r-s}{n} B_n^{(r-s+1)}(r) \frac{1}{z^n},
\end{gather}
where the generalized Bernoulli polynomials $B_n^{(\ell)}(x)$ are def\/ined by the Maclaurin expansion
\begin{gather*} \left(\frac{t}{e^t-1}\right)^\ell e^{xt}=\sum_{n=0}^\infty B_n^{(\ell)}(x)\frac{t^n}{n!} .\end{gather*}
We apply~\eqref{T:eq1} with $z=\frac14u^2$, $0<u\to\infty$, and $r=1-\frac12 b$, $s=\frac12 b$. Then we obtain with $a=\frac14 u^2+\frac12 b$,
\begin{gather*}
 \frac{\Gamma(1+a-b)}{\Gamma(a)}2^{2-2b}u^{2b-2}\sim \sum_{n=0}^\infty \frac{d_n}{u^{2n}},
\end{gather*}
where
\begin{gather*} d_n=4^n\binom{1-b}{n}B_n^{(2-b)}\left(1-\frac12 b\right) .\end{gather*}
We notice that
\begin{gather*} \left(\frac{t}{e^t-1}\right)^{2-b}e^{(1-\frac12b)t}=\left(\frac{\frac12t}{\sinh \frac12 t}\right)^{2-b} \end{gather*}
is an even function of $t$.
Therefore, $d_n=0$ for odd $n$.

It follows from \eqref{U:gammaquotient} that
\begin{gather*}
B_n'(0)=\frac12 \frac{1}{1-b}d_{n+1} ,
\end{gather*}
and then from \eqref{5:a}, \eqref{5:b}
\begin{gather}\label{T:eq4}
a_n(0)=\tilde d_n,\qquad b_n'(0)=-\frac12 \frac1{1-b} \tilde d_{n+1} ,
\end{gather}
where $\tilde d_n$ is obtained from $d_n$ by replacing $b$ by $2-b$, that is,
\begin{gather*}
\tilde d_n= 4^n\binom{b-1}{n}B_n^{(b)}\left(\frac12 b\right) .
\end{gather*}

Temme \cite[(3.22)]{T} obtained the asymptotic expansion of \eqref{U:asy3} involving polyno\-mials~$a^\dagger_n(z)$, $b^\dagger_n(z)$ in place of $a_n(z)$, $b_n(z)$.
The polynomials $a_n^\dagger(z)$, $b_n^\dagger(z)$ as follows. Introduce the function
\begin{gather*}
f(s,z)=e^{z^2\mu(s)} \left(\frac{\frac12 s}{\sinh \frac12s}\right)^b,\qquad \mu(s)=\frac1s-\frac{1}{e^s-1}-\frac12 ,
\end{gather*}
and its Maclaurin expansion
\begin{gather*}
f(s,z)=\sum_{k=0}^\infty c_k(z) s^k .
\end{gather*}
Then recursively, set $c_k^{(0)}=c_k$ and
\begin{gather*}
 c_k^{(n+1)}=4\big(z^2c_{k+2}^{(n)}+(1-b+k)c_{k+1}^{(n)}\big),
\end{gather*}
where $k\ge0 $ and $n\ge 0$.
Then set
\begin{gather*} a_n^\dagger=c_0^{(n)},\qquad b_n^\dagger =-2zc_1^{(n)}.\end{gather*}

\begin{Theorem}\label{T:t1}
For every $n=0,1,2,\dots$, we have $a_n=a_n^\dagger$ and $b_n=b_n^\dagger$.
\end{Theorem}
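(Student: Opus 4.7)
The plan is to compare the expansion \eqref{U:asy3} with Temme's expansion \cite[(3.22)]{T}, which has exactly the same shape but with $(a_n(z),b_n(z))$ replaced by $(a_n^\dagger(z),b_n^\dagger(z))$, and to deduce the equality of the polynomials by uniqueness of asymptotic expansions in the basis $\bigl\{zK_{b-1}(uz),\,(z/u)K_b(uz)\bigr\}$. Since both expansions represent the \emph{same} function $V(u,-\mu,z)=\Gamma(a)2^{b-2}u^{1-b}e^{-z^2/2}z^bU(a,b,z^2)$ with the same normalization and the same remainder bounds as $0<u\to\infty$, subtraction is legitimate.

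Setting $P_s(z):=a_s(z)-a_s^\dagger(z)$ and $Q_s(z):=b_s(z)-b_s^\dagger(z)$, the difference of the two expansions gives, for every $N$,
\begin{gather*}
zK_{b-1}(uz)\sum_{s=0}^{N-1}\frac{P_s(z)}{u^{2s}}-\frac{z}{u}K_b(uz)\sum_{s=0}^{N-1}\frac{Q_s(z)}{u^{2s}}=O\!\left(u^{-2N}\bigl(|K_{b-1}(uz)|+u^{-1}|K_b(uz)|\bigr)\right).
\end{gather*}
Fix $z>0$ and insert the standard large-argument expansions $K_\nu(uz)\sim\sqrt{\pi/(2uz)}\,e^{-uz}\sum_{k\ge 0}\gamma_k(\nu)(uz)^{-k}$ with $\gamma_0=1$. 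After dividing by the common factor $\sqrt{\pi z/(2u)}\,e^{-uz}$, the identity becomes a formal asymptotic series in $1/u$ (with rational-in-$z$ coefficients) that is $O(u^{-2N})$ for every $N$, hence all coefficients vanish.

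Matching successive powers of $1/u$ produces the equations
\begin{gather*}
\sum_{2s+k=n}\frac{P_s(z)\gamma_k(b-1)}{z^k}-\sum_{2s+k+1=n}\frac{Q_s(z)\gamma_k(b)}{z^{k+1}}=0,\qquad n=0,1,2,\dots .
\end{gather*}
The case $n=0$ gives $P_0\equiv 0$; then $n=1$ gives $Q_0\equiv 0$; then $n=2s$ and $n=2s+1$ give, by induction on $s$, first $P_s\equiv 0$ and then $Q_s\equiv 0$, because the only previously unknown quantity in each new equation is $P_s$ (respectively $Q_s$), multiplied by the nonvanishing constant $\gamma_0=1$. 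This yields $a_n(z)=a_n^\dagger(z)$ and $b_n(z)=b_n^\dagger(z)$ for all $n\ge 0$.

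The main obstacle is purely one of alignment: one must verify that Temme's result \cite[(3.22)]{T} genuinely produces an asymptotic expansion of $V(u,-\mu,z)$ in the same normalized form as \eqref{U:asy3}, with the same prefactor $\Gamma(\tfrac14u^2+\tfrac12b)2^{b-2}u^{1-b}$, the same basis $\{zK_{b-1}(uz),\,(z/u)K_b(uz)\}$, and remainders of order $u^{-2N}$; this is a matter of rereading \cite{T} and reconciling normalizations. As a consistency check one can verify directly at $z=0$, where the generating-function identity $(s/(2\sinh\tfrac12 s))^b=\sum B_n^{(b)}(\tfrac12 b)s^n/n!$ together with the recursion defining $a_n^\dagger$ gives $a_n^\dagger(0)=\tilde d_n=a_n(0)$ by \eqref{T:eq4}, in agreement with the conclusion.
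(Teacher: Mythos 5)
Your overall strategy --- subtract the two expansions of the same normalized function and invoke uniqueness of the coefficients of an asymptotic series in the basis $\{zK_{b-1}(uz),\,(z/u)K_b(uz)\}$ --- is internally coherent, and your induction on the matching equations is sound: the $n=2s$ equation isolates $P_s$ against the nonvanishing constant $\gamma_0=1$, the $n=2s+1$ equation then isolates $Q_s$, and since this holds for every $z$ in an interval and the $P_s$, $Q_s$ are polynomials, they vanish identically. The problem is that the step you set aside as ``purely one of alignment'' is the entire content of the theorem, and leaving it unexecuted is a genuine gap. Your argument requires that Temme's (3.22) expand \emph{exactly} the function $\Gamma(a)2^{b-2}u^{1-b}e^{-z^2/2}z^bU(a,b,z^2)$, with the prefactor matching to all orders in $1/u$ (not merely to leading order) and with remainders $o(u^{-2N})$ for every $N$. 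A prefactor discrepancy of the form $1+c_1u^{-2}+\cdots$ would leave the leading coefficients equal but silently corrupt every subsequent one; this is precisely the kind of error in the connection coefficient $\beta_2(u)$ that Slater made and that this paper is at pains to correct, so it cannot be waved through as bookkeeping. Until that reconciliation is carried out, the coefficient identity is assumed rather than proved.

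The paper sidesteps the issue with a purely algebraic argument. Using the partial differential equation satisfied by Temme's generating function $f(s,z)$, propagated by induction through the recursion $c_k^{(n+1)}=4(z^2c_{k+2}^{(n)}+(1-b+k)c_{k+1}^{(n)})$, it shows that $a_n^\dagger$, $b_n^\dagger$ satisfy the same recursion \eqref{1:eq3}, \eqref{1:eq4} that $a_n$, $b_n$ satisfy by Theorem~\ref{5:t1}; it then computes $a_n^\dagger(0)=4^n\frac{(1-b)_n}{n!}B_n^{(b)}\left(\frac12 b\right)$ and matches this with $a_n(0)=\tilde d_n$ from \eqref{T:eq4}; Lemma~\ref{5:l1} then forces the two solution families of the recursion to coincide. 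In other words, the ``consistency check at $z=0$'' you relegate to a closing remark, combined with the observation that both families solve the same recursion, is not a check --- it is the whole proof. If you wanted to complete your analytic route you would have to establish that Temme's normalization agrees with that of \eqref{U:asy3} to all orders, and the natural way to do so is exactly the paper's computation via \eqref{U:gammaquotient} and the generalized Bernoulli polynomials --- at which point the asymptotic-uniqueness machinery becomes superfluous. The paper's route is also more robust: it needs no information about the $z$-range or error bounds in Temme's theorem, only the algebraic definitions of $a_n^\dagger$, $b_n^\dagger$.
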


\begin{proof}
The function $f$ satisf\/ies the partial dif\/ferential equation
\begin{gather*} 4\frac{\partial f}{\partial s} =\frac{\partial^2f}{\partial z^2}+\frac1z\left(2b-1-4\frac{z^2}{s}\right)\frac{\partial f}{\partial z}-z^2 f.\end{gather*}
This implies
\begin{gather}\label{T:c}
 4(k+1)c_{k+1}+4zc_{k+1}'=c_k''+\frac{2b-1}{z} c_k'-z^2 c_k, \qquad ()'=\frac{d}{dz}.
\end{gather}
By induction on $n$ one can show that~\eqref{T:c} is also true with $c_k$ replaced by $c_k^{(n)}$ for any $n=0,1,2,\dots$.
If we use this extended equation with $k=0$ and $k=1$, then we obtain~\eqref{1:eq3},~\eqref{1:eq4} with $a_s^\dagger$, $b_s^\dagger$ in place of $A_s$, $B_s$, respectively.

When $z=0$, we have
\begin{gather*} a_n^\dagger(0)=c_0^{(n)}(0)=4^n(1-b)_n c_n(0)=4^n\frac{(1-b)_n}{n!} B_n^{(b)}\left(\frac12b\right) .\end{gather*}
Comparing with~\eqref{T:eq4} and using that $c_n(0)=0$ for odd $n$, we f\/ind, for all $n$,
\begin{gather}\label{T:eq5}
a_n^\dagger(0)=a_n(0).
\end{gather}
Since both $a_n$, $b_n$ and $a_n^\dagger, b_n^\dagger$ solve \eqref{1:eq3}, \eqref{1:eq4}, \eqref{T:eq5} implies that $a^\dagger_n=a_n$, $b^\dagger_n=b_n$ for all $n$.
\end{proof}

\section{Concluding remark}
In this paper we started from Olver's paper \cite{O}, added some results, and then applied them to the conf\/luent hypergeometric functions.
A referee pointed out that Chapter~12 of Olver's book~\cite{O2} contains a reworked version of~\cite{O} also involving error bounds.
It would be interesting to start from this book chapter and derive results analogous to the ones obtained in the present paper.
However, in contrast to~\cite{O} the book chapter assumes that $\mu$ is positive while in our original problem~\cite{C}~$\mu$ is complex.
Therefore, an extension of the results in~\cite[Chapter~12]{O2} to complex~$\mu$ would be required to obtain results for the conf\/luent hypergeometric
functions in full generality.

\pdfbookmark[1]{References}{ref}
\LastPageEnding


\begin{thebibliography}{99}
\footnotesize\itemsep=0pt

\bibitem{C}
Cohl H.S., Hirtenstein J., Volkmer H., Convergence of {M}agnus integral
 addition theorems for conf\/luent hypergeometric functions,
 \href{http://arxiv.org/abs/1601.02566}{arXiv:1601.02566}.

\bibitem{Magnus41}
Magnus W., Zur {T}heorie des zylindrisch-parabolischen {S}piegels,
 \href{http://dx.doi.org/10.1007/BF01364678}{\textit{Z.~Physik}} \textbf{118} (1941), 343--356.

\bibitem{Magnus}
Magnus W., \"{U}ber eine {B}ezeihung zwischen {W}hittakerschen {F}unktionen,
 \textit{Nachr. Akad. Wiss. G\"ottingen Math.-Phys. Kl.} \textbf{1946} (1946),
 4--5.

\bibitem{O1}
Olver F.W.J., The asymptotic solution of linear dif\/ferential equations of the
 second order for large values of a parameter, \href{http://dx.doi.org/10.1098/rsta.1954.0020}{\textit{Philos. Trans. Roy.
 Soc. London. Ser.~A.}} \textbf{247} (1954), 307--327.

\bibitem{O}
Olver F.W.J., The asymptotic solution of linear dif\/ferential equations of the
 second order in a domain containing one transition point, \href{http://dx.doi.org/10.1098/rsta.1956.0015}{\textit{Philos.
 Trans. Roy. Soc. London. Ser.~A.}} \textbf{249} (1956), 65--97.

\bibitem{O2}
Olver F.W.J., Asymptotics and special functions, \textit{Computer Science and Applied
 Mathematics}, Academic Press, New York~-- London, 1974.

\bibitem{NIST}
Olver F.W.J., Lozier D.W., Boisvert R.F., Clark C.W. (Editors), N{IST} handbook
 of mathematical functions, U.S.~Department of Commerce National Institute of
 Standards and Technology, Washington, DC, Cambridge University Press,
 Cambridge, 2010, {a}vailable at \url{http://dlmf.nist.gov}.

\bibitem{S}
Slater L.J., Conf\/luent hypergeometric functions, Cambridge University Press,
 New York, 1960.

\bibitem{T}
Temme N.M., Remarks on {S}later's asymptotic expansions of {K}ummer functions
 for large values of the {$\alpha$}-parameter, \textit{Adv. Dyn. Syst. Appl.}
 \textbf{8} (2013), 365--377, \href{http://arxiv.org/abs/1306.5328}{arXiv:1306.5328}.

\end{thebibliography}
\end{document}